
\documentclass{amsart}

\usepackage{amsmath,amssymb,amsthm}
\usepackage{pinlabel}

\hyphenation{mani-fold mani-folds sub-mani-fold sub-mani-folds topo-logy
Topo-logy geo-metry Geo-metry ana-lo-gous ana-lo-gous-ly An-omaly
se-parate}

\newtheorem{prop}{Proposition}
\newtheorem{thm}[prop]{Theorem}
\newtheorem{lem}[prop]{Lemma}
\newtheorem{cor}[prop]{Corollary}

\theoremstyle{definition}

\newtheorem{rem}[prop]{Remark}

\newtheorem*{ack}{Acknowledgements}


\def\co{\colon\thinspace}

\newcommand{\oB}{\overline{B}}

\newcommand{\C}{\mathbb{C}}

\newcommand{\D}{\mathbb{D}}
\newcommand{\rmd}{\mathrm{d}}

\newcommand{\rme}{\mathrm{e}}

\newcommand{\F}{\mathbb{F}}

\newcommand{\oG}{\overline{G}}

\newcommand{\bfh}{\mathbf{h}}

\newcommand{\rmi}{\mathrm{i}}

\newcommand{\wtM}{\widetilde{M}}
\newcommand{\wtm}{\widetilde{m}}

\newcommand{\wQ}{\widetilde{Q}}

\newcommand{\R}{\mathbb{R}}

\newcommand{\RP}{\mathbb{R}\mathrm{P}}

\newcommand{\bfs}{\mathbf{s}}
\newcommand{\Sb}{S_{\mathrm{b}}}

\newcommand{\bft}{\mathbf{t}}

\newcommand{\utsw}{u^{\bft}_{\bfs,w}}

\newcommand{\WW}{\mathcal{W}}

\newcommand{\bfx}{\mathbf{x}}
\newcommand{\xist}{\xi_{\mathrm{st}}}

\newcommand{\bfy}{\mathbf{y}}

\newcommand{\Z}{\mathbb{Z}}

\DeclareMathOperator{\ev}{ev}

\DeclareMathOperator{\im}{Im}
\DeclareMathOperator{\Int}{Int}

\DeclareMathOperator{\re}{Re}


\begin{document}

\author[H.~Geiges]{Hansj\"org Geiges}
\address{Mathematisches Institut, Universit\"at zu K\"oln,
Weyertal 86--90, 50931 K\"oln, Germany}
\email{geiges@math.uni-koeln.de}
\author[K.~Zehmisch]{Kai Zehmisch}
\address{Mathematisches Institut, WWU M\"unster,
Einstein\-stra\-{\ss}e 62, 48149 M\"unster, Germany}
\email{kai.zehmisch@uni-muenster.de}

\title{The Weinstein conjecture for connected sums}

\date{}

\begin{abstract}
We prove the Weinstein conjecture for non-trivial contact connected sums
under either of two topological conditions: non-trivial
fundamental group or torsion-free homology.
\end{abstract}

\subjclass[2010]{53D35; 37C27, 57R17, 57R65}

\thanks{H.~G.\ and K.~Z.\ are partially supported by DFG grants
GE 1245/2-1 and ZE 992/1-1, respectively.}

\maketitle


\section{Introduction}
A $(2n+1)$-dimensional closed manifold $M$ is said to satisfy the
Weinstein conjecture if the Reeb vector field of any contact form on $M$
has a periodic orbit. For a recent survey on the status of this
conjecture see~\cite{pasq12}. In his seminal paper~\cite{hofe93}
on the Weinstein conjecture in dimension three, Hofer
proved the Weinstein conjecture for overtwisted $3$-manifolds
and for any closed, orientable $3$-manifold with non-vanishing
second homotopy group; in fact, in either of these cases
there exists a \emph{contractible} periodic Reeb orbit.

These results can be combined into the following statement.

\begin{thm}[Hofer]
\label{thm:Hofer}
Let $(M,\xi)=(M_1,\xi_1)\#(M_2,\xi_2)$ be the contact connected sum
of two closed, connected contact manifolds of dimension~$3$.
Suppose we can write  $\xi=\ker\alpha$ with a
contact form $\alpha$ whose Reeb vector field does not have
any contractible periodic orbits. Then one of the
summands $(M_i,\xi_i)$ is contactomorphic to the $3$-sphere $S^3$
with its standard tight contact structure~$\xist$.
\end{thm}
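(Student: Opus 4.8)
The plan is to run Hofer's Bishop-disc argument relative to the sphere along which the connected sum is formed. Write $M=X_1\cup_S X_2$, where $S\cong S^2$ is the connected-sum sphere and $X_i=M_i\setminus\mathring{B}^3$ is the corresponding once-punctured summand; by construction of the contact connected sum we may take $S$ so that its characteristic foliation $S_\xi$ has exactly two singular points, both elliptic (this foliation depends only on $\xi$, not on the chosen contact form $\alpha$). First I would pass to the symplectisation $\bigl(\R\times M,\,d(e^r\alpha)\bigr)$ equipped with a generic $\R$-invariant almost complex structure $J$ adapted to $\alpha$; then $\Sigma:=\{0\}\times S$ is totally real away from the two singular points of $S_\xi$ and has an elliptic complex point over one of them, call it $e$. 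From $e$ there springs a maximal one-parameter family of embedded $J$-holomorphic discs
\[
u_t\co(\D,\partial\D)\longrightarrow(\R\times M,\Sigma),\qquad t\in[0,T),
\]
with $u_0$ the constant disc at $(0,e)$, each $\partial u_t$ a simple loop on $S$ about $e$, and --- by subharmonicity of $r\circ u_t$ --- with image in $\{r\le 0\}$; after relabelling the summands, these discs project into the $X_1$-side of $S$.

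Next I would analyse the end $t\to T$ by Gromov--Hofer compactness. Since the boundary condition $\Sigma$ is a closed totally real surface with no Legendrian corners, the only breaking is interior bubbling of a $J$-holomorphic sphere or of a finite-energy plane, and spheres are excluded because $d(e^r\alpha)$ is exact. So exactly one of two things happens. \emph{Either} no bubbling occurs, and the family extends to $t\in[0,T]$ with $\partial u_T$ collapsed onto the second singular point of $S_\xi$, the discs sweeping out a closed $3$-ball $B\subset X_1$ with $\partial B=S$; \emph{or} a finite-energy plane $v\co\C\to\R\times M$ splits off, necessarily asymptotic at its puncture to a closed Reeb trajectory $\gamma$ of $\alpha$, and projecting $v$ to $M$ exhibits $\gamma$ as the boundary of a disc, so that $\gamma$ is a contractible periodic orbit --- contradicting the hypothesis on $\alpha$. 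Hence the first alternative holds: $X_1=M_1\setminus\mathring{B}^3$ is a $3$-ball, so $M_1=B\cup B^3\cong S^3$; and since $B$ is filled by the holomorphic family $\{u_t\}$, the contact structure on it is (holomorphically, hence symplectically) fillable, so capping off the removed Darboux ball and invoking Eliashberg's uniqueness of the tight contact structure on $S^3$ shows that $(M_1,\xi_1)$ is contactomorphic to $(S^3,\xist)$.

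The heart of the matter --- and the step I expect to demand the real work --- is the geometry in the non-bubbling case: one must show that the Bishop family then consists of pairwise disjoint embedded discs whose $M$-projections foliate a genuine $3$-ball, the foliation extending regularly over $S$ and terminating at the second elliptic point. This is precisely Hofer's filling theorem; the connected-sum hypothesis contributes only the sphere $S$ from which to launch the family, together with the elementary remark that a once-punctured summand capped by a $3$-ball is $S^3$. A secondary, purely technical, point is the bookkeeping in the compactification (boundary versus interior bubbles, multiply covered planes), which is by now routine in this setting.
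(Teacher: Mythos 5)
Your argument is the same Bishop-family strategy as the paper's sketch: launch a one-parameter family of holomorphic discs from an elliptic singular point of the characteristic foliation of the connected-sum sphere in the (half-)symplectisation, apply compactness, rule out finite-energy planes by the hypothesis on contractible Reeb orbits, and finish with Eliashberg's uniqueness of the tight contact structure on $S^3$. The one place you and the paper genuinely diverge is in what the non-bubbling alternative is taken to deliver, and your version quietly asks for more than Hofer's 1993 theorem gives. You claim the disc family ``sweeps out a closed $3$-ball $B\subset X_1$ with $\partial B=S$'' and deduce $X_1=B$ by elementary point-set topology; but the discs live in $\{r\le 0\}\times M$, and their projections to $M$ are neither obviously embedded, nor pairwise disjoint, nor confined to one side of $S$ --- those are exactly the refinements that the later Hofer--Wysocki--Zehnder finite-energy-foliation machinery supplies. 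What the 1993 filling argument yields directly is only a continuous map $\phi\co(B^3,\partial B^3)\to(M,S)$ of degree one on the boundary, i.e.\ that $S$ is null-homotopic. That is why the paper speaks of a ``$3$-cell'' rather than an embedded ball and then invokes the Poincar\'e conjecture to pass from ``$S$ bounds a singular $3$-cell'' to ``one of the $M_i$ is $S^3$''. So either cite the stronger foliation results to justify the embedded ball, or downgrade to the $3$-cell and route the topological conclusion through Poincar\'e as the paper does; as written, the step ``discs sweep out a genuine embedded $3$-ball on the $X_1$-side'' is asserted rather than established.
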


\begin{proof}
Consider the belt sphere $S^2\subset M$ of the connected sum.
Since there are no contractible Reeb orbits, the contact structure
$\xi$ must be tight, and by a perturbation of $S^2$ one may achieve
that the characteristic foliation of $S^2$ induced by $\xi$
has two elliptic singular points connected by all the leaves
of the foliation, cf.\ \cite[Section 4.6]{geig08}. Under the
assumption of the non-existence of contractible
periodic orbits, Hofer then produces
a filling of $S^2$ by holomorphic discs in the half-symplectisation
$(-\infty,0]\times M$, which projects to a $3$-cell in $M$
bounded by~$S^2$. Together with the
Poincar\'e conjecture and Eliashberg's classification
of tight contact structures on~$S^3$, one concludes that
one of the $(M_i,\xi_i)$ equals $(S^3,\xist)$.
\end{proof}

Our aim is to prove an analogous result in higher dimensions.

\begin{thm}
\label{thm:connected}
Let $(M,\xi)=(M_1,\xi_1)\#(M_2,\xi_2)$ be the contact connected sum of two
closed, connected contact manifolds of dimension $2n+1\geq 5$. Suppose we
can write $\xi=\ker\alpha$ with a
contact form $\alpha$ whose Reeb vector field does not have
any contractible periodic orbits. Assume further that
one of the following conditions is satisfied:
\begin{itemize}
\item[(i)] $M$ is simply connected and has torsion-free homology, or
\item[(ii)] $M$ is not simply connected.
\end{itemize}
Then one of the summands $M_1,M_2$ is homeomorphic to~$S^{2n+1}$.
\end{thm}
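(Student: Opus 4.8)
The plan is to carry the argument for Theorem~\ref{thm:Hofer} into higher dimensions. The connected sum $M=M_1\#M_2$ comes with a separating belt sphere $\Sigma\cong S^{2n}$, namely the gluing sphere of the two Darboux balls removed from $M_1$ and $M_2$; if $N_1,N_2$ denote the closures of the two components of $M\setminus\Sigma$, then each $N_i$ is a compact manifold with $\partial N_i=S^{2n}$ and $M_i\cong N_i\cup_{S^{2n}}B^{2n+1}$. Since gluing two closed balls along their boundary always gives a manifold homeomorphic to $S^{2n+1}$ (Alexander's trick), it suffices to show that one of $N_1,N_2$ is homeomorphic to a ball. I would obtain this by filling one of the $N_i$ with a moduli space of $J$-holomorphic discs attached to $\Sigma$ in the half-symplectisation $W=\bigl((-\infty,0]\times M,\,d(\rme^s\alpha)\bigr)$, using the absence of contractible periodic Reeb orbits to prevent the discs from bubbling off finite-energy planes, and using the two topological hypotheses to control the remaining degenerations and the topology of the filled region.

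First I would normalise. By the standard model for the contact connected sum one may assume that a neighbourhood of $\Sigma$ in $M$ is a Darboux chart in which $\Sigma$ is the round sphere $S^{2n}(1)\subset\R_z\times\C^n$, whose $\xi$-induced characteristic foliation has exactly two elliptic singular points, the poles $e_\pm$ on the $z$-axis; and I would choose $J$ on $W$ to be cylindrical away from $\{0\}\times\Sigma$ and, near $e_+$, modelled on the standard integrable structure of the symplectisation of $(\R^{2n+1},\xist)$. In dimension three this produces Bishop's one-parameter family of holomorphic half-discs with boundary on $\Sigma$ emanating from $e_+$. Here is the first genuinely higher-dimensional point: for $n\ge2$ one has $\dim\Sigma=2n>n+1$, so $\Sigma$ is nowhere totally real in $W$ and the naive Bishop boundary value problem is not Fredholm. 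The remedy is to build the family instead from the standard holomorphic discs of the Darboux ball --- the discs cut out by complex lines meeting $\Sigma$, organised over the $(2n-1)$-dimensional leaf space of the characteristic foliation --- and to show that the resulting boundary value problem, with boundary sliding along the complex tangencies of $\Sigma$, is regular and produces a $(2n-1)$-parameter moduli space $\mathcal M$ of embedded discs whose interiors fill a half-neighbourhood of $e_+$ in $W$, projecting onto a neighbourhood of $e_+$ in one of the components, say $N_1$, of $M\setminus\Sigma$.

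I would then continue $\mathcal M$ as far as it can be continued and analyse the limit by compactness theory of the symplectic-field-theory type. Sphere bubbling cannot occur because $W$ is exact. If holomorphic energy escapes into the cylindrical end, Gromov--Hofer compactness yields a broken curve containing a finite-energy plane asymptotic to a periodic Reeb orbit; as every disc in $\mathcal M$ is contractible in $M$ and contractibility passes to the limit, this orbit would be a contractible periodic Reeb orbit, against the hypothesis. The one remaining possibility --- a holomorphic disc with boundary on $\Sigma$ breaking off, which has no analogue in Hofer's three-dimensional argument --- is exactly what the topological hypotheses are designed to exclude: under~(i), van Kampen and Mayer--Vietoris show that each $N_i$ is simply connected with torsion-free homology, which suffices to rule out a non-trivial disc bubble and to force the region swept by the continued family to be a homology ball; under~(ii) one arranges the continued family to lie on the side of $\Sigma$ carrying the non-trivial fundamental group, where a disc bubble is impossible for $\pi_1$-reasons, and concludes that the opposite side is the one that gets filled. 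Once all degenerations are excluded the family closes up, symmetrically at $e_-$, its discs foliate $N_1$ by $2$-discs shrinking to points at $e_\pm$, and --- as in Hofer's case --- the associated evaluation map is a homeomorphism $N_1\cong B^{2n+1}$; hence $M_1$ is homeomorphic to $S^{2n+1}$.

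The step I expect to be the principal obstacle is this higher-dimensional Bishop theory: in dimension $\ge5$ the belt sphere is no longer a totally real boundary condition, so both the construction of the initial disc family near $e_+$ and the control of disc bubbling along the continuation are far more delicate than in Hofer's setting, and it is precisely there that the hypothesis of non-trivial fundamental group, or alternatively of trivial fundamental group together with torsion-free homology, is indispensable.
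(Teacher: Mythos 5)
Your proposal correctly isolates the central difficulty---for $n\geq 2$ the belt sphere $\Sigma=S^{2n}$ is nowhere totally real, so Hofer's Bishop family cannot be run with boundary on $\Sigma$ itself---but the remedy you sketch does not solve it, and the role you assign to the topological hypotheses (i)/(ii) is not the one they actually play.

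The ``boundary sliding along the complex tangencies of $\Sigma$'' problem you describe is not set up as a Fredholm problem and I do not see how to make it one: the complex tangencies of $\Sigma$ form a codimension-two singular set and a partially free boundary condition of this sort is not a standard elliptic problem. The paper sidesteps this entirely: rather than ever placing disc boundaries on $\Sigma$, it realises $(M,\xi)$ as the result of an index-$1$ contact surgery on $(M',\xi')$, builds a symplectic cobordism $W$ containing the surgery handle modelled on $\C^{n+1}$, and uses as boundary condition a \emph{family of genuinely Lagrangian cylinders} $L^{\bft}=(\{0\}\times\R_w)\times(\R^{n-1}\times\{\rmi\bft\})\times\partial D^2$ inside the handle. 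These are totally real, the standard discs in the flat region provide an explicit regular chart, and one gets an honest $(2n-1)$-dimensional moduli space $\WW$ of holomorphic discs. The sphere $\Sb$ only enters later, as the target of a degree-$1$ map $f\co\bigl(\WW\times\D,\partial(\WW\times\D)\bigr)\to(M_R,\Sb)$ obtained by deforming the evaluation map and projecting along the Liouville flow.

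You also use hypotheses (i) and (ii) to rule out disc bubbling, and you claim the filled family yields a homeomorphism $N_1\cong B^{2n+1}$. Neither matches what is actually needed or provable. Disc bubbling in the paper is excluded purely by the Lagrangian boundary condition (which fixes the energy) together with the homotopical normalisation (M2); plane bubbling and breaking are excluded by the Liouville/maximum-principle argument and the no-contractible-orbit hypothesis. The conditions (i)/(ii) enter only afterwards, in a degree-theoretic and algebraic-topological endgame: from the degree-$1$ boundary map one shows that $\deg f_1-\deg f_2=1$ for the restrictions to the two sides of $\Sb$ (Lemma~\ref{lem:degrees}); Lemma~\ref{lem:Hirsch} and Lemma~\ref{lem:f-star} then control $\pi_1(M_i')$, and Lemma~\ref{lem:homology} controls torsion in $H_*(M_i')$ in terms of the degrees. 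Under (i), one side becomes a simply connected homology ball; under (ii), either $\pi_1(M)$ is infinite (so one degree vanishes, the other is $\pm1$) or it is finite non-trivial, in which case a lift to the universal cover and a preimage-counting argument show that the relevant torsion primes divide both $\deg f_1$ and $\deg f_1-1$, hence are trivial. The conclusion in every case is that some $M_i'$ is a simply connected \emph{homology} ball bounded by $S^{2n}$; it is a ball by the $h$-cobordism theorem, not because the evaluation map is a homeomorphism, and $M_i\cong M_i'\cup B^{2n+1}$ is then homeomorphic to $S^{2n+1}$. As written, your argument has a genuine gap at the Fredholm set-up, attributes the wrong role to the hypotheses, and overclaims the structure of the filling.
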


Put another way, if $(M,\xi)$ is a non-trivial connected sum
(i.e.\ neither summand is a homotopy sphere)
and $M$ satisfies condition (i) or (ii), then any contact
form defining $\xi$ will have a contractible periodic Reeb orbit.
This result has the potential to serve as a contact-geometric
primality test for manifolds. For instance, if $(Q,g)$
is a Riemannian manifolds without contractible periodic geodesics
(say, a manifold with non-positive sectional curvature),
then the unit cotangent bundle $ST^*Q$ with its canonical contact form
does not have any contractible periodic Reeb orbits, since the
Reeb flow equals the cogeodesic flow, cf.\ \cite[Section~1.5]{geig08}.
If $ST^*Q$ satisfies one of the topological assumptions (i)
or~(ii), it cannot be a non-trivial contact connected sum.

In their work on the planar circular restricted three-body problem,
Albers et al.~\cite{afkp12} show that the energy hypersurface
for energies slightly above the energy of the first Lagrange point
is the contact connected sum of two copies of $\RP^3$ with its
standard tight contact structure. Hofer's Theorem~\ref{thm:Hofer}
then implies the existence of a periodic orbit. Our
Theorem~\ref{thm:connected} may have similar applications.

\vspace{1mm}

The germ of the contact structure $\xi=\ker\alpha$ near a compact
hypersurface $S\subset M$ is determined by the $1$-form
$\alpha|_{TS}$, cf.\ \cite[Proposition~6.4]{dige12}.
In a contact connected sum as in~\cite{wein91}, cf.\ \cite[Chapter~6]{geig08},
the contact form along the belt sphere $S^{2n}$ equals
$\frac{1}{2}(\bfx\, \rmd\bfy-\bfy\, \rmd\bfx)|_{TS^{2n}}$,
where $S^{2n}$ is regarded as a subset of $\R\times\R^n\times\R^n$
with coordinates $(w,\bfx,\bfy)$. We shall call a $2n$-sphere
with a germ of a contact structure equal to this one
a {\bf standard} $2n$-sphere. Conversely, given a
\emph{separating} standard $2n$-sphere in a contact manifold, the manifold is
a contact connected sum.

The following theorem tells us that under the assumption on
the non-existence of contractible periodic Reeb orbits,
the property of $S^{2n}$ being separating follows automatically
from a knowledge of the contact structure near~$S^{2n}$.
So in Theorem~\ref{thm:connected} it would suffice to require the existence
of any standard $2n$-sphere.

\begin{thm}
\label{thm:sphere}
Let $(M,\xi=\ker\alpha)$ be a closed $(2n+1)$-dimensional contact manifold,
$n\geq 2$, such that $\alpha$ does not have any contractible periodic
Reeb orbits. If $S^{2n}\subset M$ is a standard sphere,
then $S^{2n}$ separates, so that $(M,\xi)$
is a contact connected sum.
\end{thm}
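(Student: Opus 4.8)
\emph{The plan} is to reduce to a homological statement and then adapt Hofer's filling argument from the proof of Theorem~\ref{thm:Hofer} to dimension $2n+1$. A closed connected hypersurface in a connected closed manifold fails to separate precisely when its $\Z/2$-fundamental class is non-zero; and by the remark preceding the statement, once $S^{2n}$ separates, $(M,\xi)$ is a contact connected sum. So it suffices to exhibit a singular $(2n+1)$-chain in $M$ whose boundary is the fundamental cycle of $S^{2n}$, which forces $[S^{2n}]=0$ in $H_{2n}(M;\Z/2)$ via the long exact sequence of the pair $(M,S^{2n})$. Such a chain is to be produced as a ``filling of $S^{2n}$ by holomorphic discs''.

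\emph{Set-up and Bishop family.} I would pass to the half-symplectisation $W=(-\infty,0]\times M$ with symplectic form $\rmd(\rme^t\alpha)$ and pick a compatible almost complex structure $J$ that is cylindrical near $t=-\infty$, makes $\{0\}\times M$ a $J$-convex boundary, and near $S^{2n}$ agrees with the model complex structure coming from the standard neighbourhood, so that $W$ near $S^{2n}$ looks like a neighbourhood of the round sphere in $\R\times\C^n$. In that model $S^{2n}=\{w^2+|\bfz|^2=1\}\subset\R_w\times\C^n$ bounds the ball $\{w^2+|\bfz|^2\le1\}$, which is swept out by the flat holomorphic discs $\zeta\mapsto\bigl(w_0,\sqrt{1-w_0^2}\,\zeta\,v\bigr)$ with $w_0\in(-1,1)$ and $v$ a complex line; these have boundary on $S^{2n}$, are automatically Fredholm regular (Riemann--Hilbert for the codimension-two boundary condition), and shrink to the poles as $w_0\to\pm1$. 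Let $\mathcal{M}$ be the moduli space of $J$-holomorphic discs $u\co(\D,\partial\D)\to(W,\{0\}\times S^{2n})$, modulo reparametrisation, lying in the connected component of this Bishop family within its relative homology class; after a generic perturbation of $J$ away from the Bishop discs and from $\{0\}\times M$ this should be a smooth manifold with boundary. Projecting the discs to $M$ and keeping track of the boundary marked point should, exactly as in the three-dimensional case, assemble the images into a compact $(2n+1)$-dimensional family mapping to $M$ whose boundary maps with degree one onto $S^{2n}$ --- Hofer's ``$(2n+1)$-cell bounded by $S^{2n}$'' --- and hence into the desired chain, provided $\mathcal{M}$ is compact.

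\emph{Compactness, and the main obstacle.} Proving compactness of $\mathcal{M}$ is the crux, and it is here that the hypothesis on $\alpha$ is used, just as in Hofer's proof. The maximum principle for the $J$-plurisubharmonic function $\rme^t$ confines the discs to $t\le 0$ with interiors in $t<0$; since all discs lie in a fixed relative class they have uniformly bounded energy. By Gromov--Hofer compactness a non-convergent sequence degenerates to a holomorphic building: holomorphic sphere bubbles are excluded because $\rmd(\rme^t\alpha)$ is exact; disc bubbles with boundary on $S^{2n}$ are ruled out by the explicit model geometry near the boundary together with a small-area estimate; and a building with a non-trivial level in the cylindrical region $\R\times M$ would contain a punctured holomorphic curve asymptotic to a periodic Reeb orbit of $\alpha$, which after capping off that puncture bounds a configuration of contractible total boundary, hence is contractible --- contradicting the hypothesis. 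So $\mathcal{M}$ is compact. I expect the genuine technical difficulties to be, first, the Fredholm and transversality theory for holomorphic discs with boundary on the codimension-two, non-totally-real submanifold $S^{2n}$, and, second, upgrading compactness of $\mathcal{M}$ to the statement that the filling is a ``$(2n+1)$-cell'', i.e.\ that the assembled evaluation map has degree-one boundary onto $S^{2n}$; the exclusion of Reeb-orbit breaking, by contrast, is immediate from the assumption on $\alpha$, and the rest then yields the bounding chain, the vanishing of $[S^{2n}]$, and the conclusion that $(M,\xi)$ is a contact connected sum.
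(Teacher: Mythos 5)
Your compactness heuristics (exclusion of sphere bubbles by exactness, exclusion of breaking by the no-contractible-Reeb-orbit hypothesis, confinement by a maximum principle) are in the right spirit, but the core of your construction has a gap that is not merely ``a technical difficulty to be overcome'': for $n\geq 2$ the sphere $S^{2n}$ cannot serve as a boundary condition for a Fredholm problem of holomorphic discs. In the symplectisation $W$, which has real dimension $2n+2$, a totally real submanifold has dimension at most $n+1$, and a Lagrangian has dimension exactly $n+1$. Since $2n>n+1$ once $n\geq 2$, the belt sphere $S^{2n}$ is too large to be totally real anywhere, and there is no Riemann--Hilbert/Bishop-family set-up with boundary on it. This is exactly the place where Hofer's three-dimensional argument is special: for $n=1$ one has $2n=n+1$, the $2$-sphere is half-dimensional, and it is totally real away from the two elliptic points, so a genuine Bishop family exists. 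None of this survives in higher dimensions, so the moduli space $\mathcal{M}$ in your proposal is not defined, and the explicit ``flat discs'' you write down do not sit inside any elliptic boundary value problem.

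The paper sidesteps this by not using $S^{2n}$ as a boundary condition at all. It attaches a model $1$-handle (thin and thick versions) and builds a Liouville cobordism $W$ whose boundary components include two copies of $M$; inside the handle it writes down an explicit family of genuinely Lagrangian ($n{+}1$-dimensional) cylinders $L^{\mathbf t}$ in $\C^{n+1}$, foliated by standard holomorphic discs. The moduli space $\mathcal W$ consists of $J$-holomorphic discs with boundary on these Lagrangian cylinders (with $\mathbf t$ allowed to vary), so Fredholm theory and transversality are available in the standard way. The belt sphere $S_{\mathrm b}$ then appears as the \emph{target} of an evaluation map: one shows that projecting $\ev\co\mathcal W\times\D\to W$ along the Liouville flow to $M_R$ sends $\partial(\mathcal W\times\D)$ with degree one onto $S_{\mathrm b}$, using the monotonicity estimate of Lemma~\ref{lem:non-standard} to control non-standard discs near the boundary. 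That degree-one statement is precisely what yields a $(2n{+}1)$-chain bounding $S_{\mathrm b}$, hence the separation conclusion. To repair your proposal you would need to replace the boundary condition on $S^{2n}$ by a family of Lagrangians of the correct dimension and rephrase ``filling $S^{2n}$'' as a degree statement about an evaluation map --- which is essentially a reconstruction of the paper's argument.
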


This immediately leads to the following corollary.

\begin{cor}
If a contact manifold $(M,\xi)$ is obtained by an index~$1$ surgery
on a closed, \emph{connected} contact manifold $(M',\xi')$,
then any contact form defining $\xi$ must have contractible
periodic Reeb orbits.
\end{cor}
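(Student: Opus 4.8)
The plan is to derive the corollary as a formal consequence of Theorem~\ref{thm:sphere}, by exhibiting a \emph{non-separating} standard $2n$-sphere inside $M$; throughout I assume $n\geq 2$, as in that theorem, so that $\dim M=2n+1\geq 5$. Note first that $M$, being obtained by surgery on the closed connected manifold $M'$, is again closed and connected. Next I would recall the local model of a contact index~$1$ surgery from~\cite{wein91} (see also \cite[Chapter~6]{geig08}): one removes two Darboux balls from $(M',\xi')$ and glues in a contact $1$-handle, and by construction the contact form induced on the belt sphere $S^{2n}$ of that handle is precisely $\frac12(\bfx\,\rmd\bfy-\bfy\,\rmd\bfx)|_{TS^{2n}}$. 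Hence $S^{2n}\subset M$ is a standard sphere in the sense of the definition preceding Theorem~\ref{thm:sphere}; since this only concerns the germ of $\xi$ near $S^{2n}$, it does not depend on the choice of contact form representing $\xi$.

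Second, I would check that $S^{2n}$ does not separate $M$. Writing $M=\bigl(M'\setminus(B_0\sqcup B_1)\bigr)\cup\bigl(D^1\times S^{2n}\bigr)$, where $B_0,B_1\subset M'$ are the two removed balls attached so that $\partial B_i\simeq\{i\}\times S^{2n}$ and the belt sphere is $\{1/2\}\times S^{2n}$, the complement $M\setminus S^{2n}$ deformation retracts onto $M'\setminus(B_0\sqcup B_1)$. The latter is connected, because $M'$ is connected and $\dim M'\geq 3$. Thus $S^{2n}$ is non-separating. This is the only step that uses connectedness of $M'$: if instead $M'$ had two components joined by the surgery, the belt sphere would separate, and one would recover an ordinary connected sum.

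Finally I would conclude by contradiction. Suppose some contact form $\alpha$ with $\ker\alpha=\xi$ had a Reeb flow without contractible periodic orbits. Applying Theorem~\ref{thm:sphere} to the standard sphere $S^{2n}\subset M$ would then force $S^{2n}$ to separate $M$, contradicting the previous paragraph. Therefore every contact form defining $\xi$ must admit a contractible periodic Reeb orbit, which is the assertion of the corollary.

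I do not expect any real obstacle here: given Theorem~\ref{thm:sphere}, the argument is purely formal, the two substantive ingredients being the identification of the surgery belt sphere with a standard sphere and the elementary topological fact that it does not separate. The point deserving the most attention is the former --- making sure that the contact $1$-handle of an index~$1$ surgery genuinely induces the model germ $\frac12(\bfx\,\rmd\bfy-\bfy\,\rmd\bfx)|_{TS^{2n}}$ on its belt sphere, uniformly over all contact forms representing $\xi$ --- but this is built into the surgery construction and is already recorded in the discussion preceding Theorem~\ref{thm:sphere}.
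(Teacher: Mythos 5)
Your proposal is correct and takes essentially the same route as the paper, whose two-sentence proof simply observes that the belt sphere of the $1$-handle is non-separating and then invokes Theorem~\ref{thm:sphere}. You have merely filled in the (correct) details of why the belt sphere is standard and why connectedness of $M'$ makes it non-separating.
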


\begin{proof}
The belt sphere of the $1$-handle corresponding to the surgery does
not separate~$M$. Now apply Theorem~\ref{thm:sphere}.
\end{proof}

For example, the standard contact structure on $S^1\times S^{2n}$
is obtained by an index~$1$ surgery on the standard $S^{2n+1}$.
So any contact form defining this contact
structure must have a contractible periodic Reeb orbit.

\vspace{1mm}

The strategy for proving Theorem~\ref{thm:sphere}
is to show that the moduli space of suitable holomorphic discs,
which will be compact under the assumption on the
non-existence of contractible periodic Reeb orbits,
leads to a filling of $S^{2n}$ inside~$M$, i.e.\ $S^{2n}$
is homologically trivial. With the additional assumption (i) or
(ii) in Theorem~\ref{thm:connected}, this filling can be
shown to be a ball.

We start with a moduli space $\WW$ of holomorphic discs with
Lagrangian boundary condition inside a half-symplectisation $W$
of~$(M,\xi)$. By deforming the evaluation map $\ev\co
\WW\times\D\rightarrow W$ to a map into $M$ which sends
the boundary $\partial(\WW\times\D)$ onto the belt sphere
of the index~$1$ surgery, we arrive at the desired
topological conclusions.

A different approach to the type of questions
addressed by Theorems \ref{thm:connected} and~\ref{thm:sphere}
has been suggested by Niederkr\"uger~\cite{nied13}
in collaboration with Ghiggini and Wendl.
\section{Contact surgery of index~$1$}
\subsection{The model handle}
Given a standard sphere $S^{2n}\subset (M,\xi)$ as assumed
in Theorem~\ref{thm:sphere}, a neighbourhood of it can be
identified with a neighbourhood of the belt sphere
in the standard picture for an index~$1$ surgery, see for instance
\cite[Figure~6.4]{geig08}. This allows one to perform a reverse contact
surgery on that sphere. In other words, $(M,\xi)$
is the result of performing an index~$1$ surgery on a suitable
contact manifold $(M',\xi')$.

The standard contact structure $\ker\bigl(\rmd w+\frac{1}{2}(\bfx\,\rmd\bfy-
\bfy\,\rmd\bfx)\bigr)$ on $\R^{2n+1}$ is preserved by the map
$(w,\bfx,\bfy)\mapsto(\lambda^2 w,\lambda\bfx,\lambda\bfy)$;
the homothety $(w,\bfx,\bfy)\mapsto(w,\bfx,\bfy)/R$ pulls back the
standard contact form to $\bigl(R\,\rmd w+
\frac{1}{2}(\bfx\,\rmd\bfy-\bfy\,\rmd\bfx)\bigr)/R^2$.
Therefore we may assume that the index~$1$ surgery on $(M',\xi')$ is performed
inside two balls of large radius $R$, with contact form
$\pm R\,\rmd w+\frac{1}{2}(\bfx\,\rmd\bfy-\bfy\,\rmd\bfx)$ for $\xi'$
on the respective ball.

Here is an explicit description of this surgery. We slightly modify
the picture in \cite{wein91} and \cite{geig08} so as to obtain
a good model for the holomorphic analysis.

\begin{figure}[h]
\labellist
\small\hair 2pt
\pinlabel $(w,\bfx,\bfy)$ [l] at 218 461
\pinlabel $v$ [t] at 460 215
\pinlabel $R$ [b] at 405 219
\pinlabel $-R$ [b] at 21 218
\pinlabel ${\text{flow of $Y$}}$ [tr] at 379 347
\pinlabel ${\text{upper boundary}}$ [l] at 78 352
\pinlabel ${\text{lower}}$ [l] at 397 310
\pinlabel ${\text{boundary}}$ [l] at 397 295
\endlabellist
\centering
\includegraphics[scale=0.52]{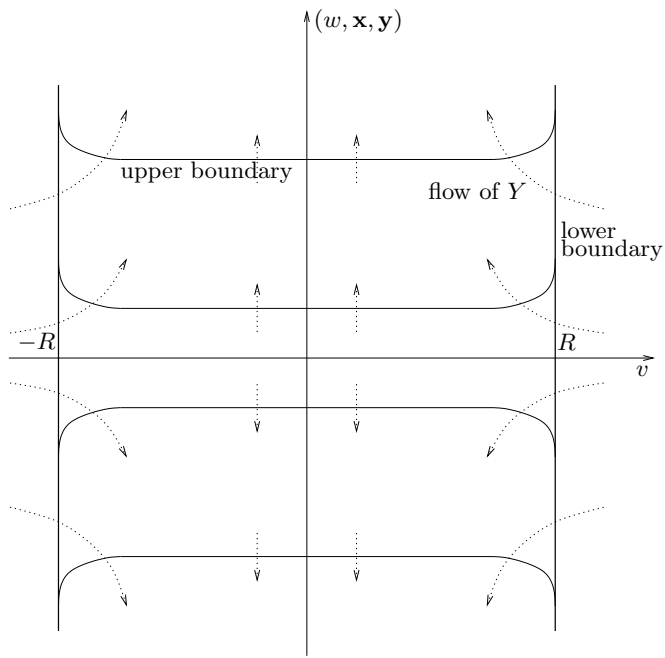}
  \caption{The surgery model}
  \label{figure:handle}
\end{figure}

On $\R^{2n+2}$ with coordinates $v,w\in\R$ and $\bfx,\bfy\in\R^n$
we consider the symplectic form $\omega_0=\rmd v\wedge\rmd w+
\rmd\bfx\wedge\rmd\bfy$ and the Liouville vector field
\[ Y_0=-v\,\partial_v+2w\,\partial_w+
\frac{1}{2}(\bfx\,\partial_{\bfx}+\bfy\,\partial_{\bfy}).\]
This vector field is transverse to the hypersurfaces
$\{v=\pm R\}$, where it induces the contact form
\[ i_{Y_0}\omega_0=\mp R\,\rmd w+\frac{1}{2}(\bfx\,\rmd\bfy-\bfy\,\rmd\bfx),\]
so we may identify balls of Radius $R$ in $\{v=\pm R\}$
with the two Darboux charts in $(M',\xi')$ where we want
to perform the surgery. This surgery is performed by replacing
two balls of radius $r<R$ in these charts with a cylinder
\[ \bigl\{(v,w,\bfx,\bfy)\co w^2+\frac{1}{2}(|\bfx|^2+|\bfy|^2)=r^2\bigr\} \]
and rounding corners, see
Figure~\ref{figure:handle}. That figure shows a `thick' handle
with $r$ close to~$R$, and a `thin' handle with $r=1$, say. Since $Y_0$ is
everywhere transverse to the new hypersurface, it induces a
contact structure on the manifold obtained by index~$1$ surgery.
On the `straight' part of the cylinder we glued in, i.e.\
away from a small region where we had to round corners,
we would like to replace $Y_0$ by the `radial' Liouville vector field
\[ Y_1=w\,\partial_w+\frac{1}{2}(\bfx\,\partial_{\bfx}+\bfy\,\partial_{\bfy}).\]
This $Y_1$ has the advantage of being the gradient
vector field of the strictly plurisubharmonic function
\[ (v,w,\bfx,\bfy)\longmapsto
\frac{1}{2}w^2+\frac{1}{4}(|\bfx|^2+|\bfy|^2)\]
on $\C^{n+1}$ with coordinates $v+\rmi w,\bfx+\rmi\bfy$,
so the induced contact structure on the cylinder is given by the tangent
hyperplanes invariant under the standard complex structure $\rmi$
on~$\C^{n+1}$.

It is indeed possible to interpolate between the Liouville
vector fields $Y_0$ near $v=\pm R$ and $Y_1$ for $|v|<R-\varepsilon$
via Liouville vector fields transverse to the boundary of the
handle. Let $v\mapsto a(v)$ be a smooth function
interpolating monotonically between $a\equiv -1$ for $|v|\leq
R-\varepsilon$ and $a\equiv 0$ for $|v|\geq R-\varepsilon/2$.
Then modify $Y_0$ by adding the Hamiltonian vector field $X_H$
of the function $H(v,w,\bfx,\bfy)=a(v)\cdot vw$. This will certainly
produce a Liouville vector field $Y$, and it remains to check
the other desired properties.

We have
\[ X_H=-\frac{\partial H}{\partial w}\,\partial_v+
\frac{\partial H}{\partial v}\,\partial_w
= -a(v)\cdot v\,\partial_v+\bigl(a'(v)\cdot vw+a(v)\cdot w\bigr)\,\partial_w,\]
hence
\begin{eqnarray*}
Y & = & Y_0+X_H\\
  & = & -\bigl(1+a(v)\bigr)\cdot v\,\partial_v+
        \bigl(2+a'(v)\cdot v+a(v)\bigr)\cdot w\,\partial_w+
        \frac{1}{2}(\bfx\,\partial_{\bfx}+\bfy\,\partial_{\bfy}).
\end{eqnarray*}
So $Y$ interpolates between $Y_0$ for $|v|\geq R-\varepsilon/2$
and $Y_1$ for $|v|\leq R-\varepsilon$ as desired. Moreover,
the coefficient of $w\,\partial_w$ is always positive,
which guarantees the transversality requirement.

A similar construction can be found in \cite[Theorem~7.6.2]{geig08}.

\subsection{The symplectic cobordism}
\label{subsection:cobordism}
Write $M_1$, $M_R$ for the copies of $M$ obtained by attaching the thin
resp.\ thick handle to~$M'$. We write $\alpha_1$ for a contact form on
$M_1$ inducing the contact
structure $\xi'$ away from the thin handle and equal to $i_Y\omega_0$
on the upper boundary of the thin handle, so that $(M_1,\ker\alpha_1)$
is contactomorphic to $(M,\xi)$. Similarly, we define $\alpha_R$ on~$M_R$.
Let $\alpha$ be any other contact form on $M$ with $\ker\alpha=\xi$.

We now build a symplectic manifold $(W,\omega)$ as follows.
Write $\alpha_1=\rme^g\alpha$ with a suitable smooth
function $g\co M\rightarrow\R$. Since $M$ is compact,
we can choose an $a_0\in\R^+$ such that $\rme^{-a_0}<\rme^g$. Then
\[ \bigl\{ (a,x)\in\R\times M\co -a_0\leq a\leq g(x)\bigr\}\]
with symplectic form $\rmd(\rme^a\alpha)$ and Liouville vector field
$\partial_a$ defines a cylindrical Liouville cobordism between
\[ \bigl(\{-a_0\}\times M,\rme^{-a_0}\alpha\bigr)\;\;\text{and}\;\;
(M_1,\alpha_1).\]
To the concave end $\{-a_0\}\times M$ of this symplectic cobordism, we
can attach the half-symplec\-ti\-sa\-tion
\[ \bigl((-\infty,-a_0]\times M,\rmd(\rme^a\alpha)\bigr).\]
Finally, to the convex end $M_1$ of the cylindrical Liouville
cobordism we attach the part of the handle between the
thin and the thick upper boundary.

The resulting symplectic manifold $(W,\omega)$ admits a globally
defined and nowhere vanishing Liouville vector field inducing the
appropriate contact form on the trans\-ver\-sals $\{-a_0\}\times M$, $M_1$,
and~$M_R$.
\subsection{The almost complex structure}
We choose an $\omega$-compatible almost complex structure
$J$ on $W$ with the following properties:
\begin{itemize}
\item[(J1)] On $(-\infty,-a_0]\times M$ we choose
an $\R$-invariant almost complex structure $J$ preserving
$\ker\alpha$ and sending $\partial_a$ to the Reeb vector field
of~$\alpha$.
\item[(J2)] On the part between the thin and the thick handle, i.e.\
between $M_1$ and~$M_R$, we choose $J=\rmi$ for $|v|\leq R-\varepsilon$,
and generic elsewhere, subject to
the condition that $M_R$ be a (strictly) $J$-convex boundary.
\item[(J3)] On $[-a_0,0]\times M$ we choose any generic almost complex
structure extending the choices in (J1) and~(J2).
\end{itemize}
\section{The moduli space of holomorphic discs}
\subsection{The Lagrangian boundary condition}
We are now going to describe a family of Lagrangian
cylinders in $\C^{n+1}$. Their intersection with
the part of $(W,\omega)$ between $M_1$ and $M_R$ ---
which for ease of reference we shall simply refer to as
the `handle' --- will provide the boundary condition for the
holomorphic discs we wish to study.

Recall that on $\C^{n+1}$ we have the complex coordinates
$v+\rmi w,\bfx+\rmi\bfy$; we now write the latter as
$\bfx+\rmi\bfy=\bfh+z_n$ with $\bfh\in\C^{n-1}$ and $z_n\in\C$.
For each $\bft\in\R^{n-1}$, the Lagrangian cylinder
\[ L^{\bft}:=\bigl(\{0\}\times\R_w\bigr)\times
\bigl(\R^{n-1}_{\re(\bfh)}\times\{\rmi\bft\}\bigr)\times
\partial D^2_{z_n}\subset\C\times\C^{n-1}\times\C\]
is filled by standard holomorphic discs
\[ \D\ni z\longmapsto\utsw(z):=\bigl(0,w;\bfs+\rmi\bft;z\bigr),\]
where $\D$ denotes the unit disc in $\C$, and the parameters
$w,\bfs$ range over $\R$ and $\R^{n-1}$, respectively.
\subsection{Definition of the moduli space}
We define $\WW$ to be the moduli space of holomorphic
discs~$u\co\D\rightarrow W$ (smooth up to the boundary) 
with $u(\partial\D)\subset L^{\bft}\cap W$, where $\bft$ is allowed
to vary over $\R^{n-1}$, and satisfying the following conditions:
\begin{itemize}
\item[(M1)] The relative homology class $[u]\in H_2(W,L^{\bft}\cap W)$
equals that of some standard disc $\utsw$ in $W$.
\item[(M2)] The parametrisation of $u$ is fixed by the
requirement $u(\rmi^k)\in L^{\bft}\cap\{z_n=\rmi^k\}$ for $k=0,1,2$.
\end{itemize}
Observe that standard discs of the
same level $\bft$ are homotopic in $(W,L^{\bft}\cap W)$.

\subsection{Properties of the holomorphic discs}
The present set-up is similar to that in our previous
paper~\cite{geze}, and the basic properties of the holomorphic discs
are established by the same arguments as in \cite[Section~3.3]{geze}.

\begin{prop}
The discs $u\in\WW$ have the following properties:
\begin{itemize}
\item[(i)] The Maslov index equals~$2$.
\item[(ii)] The symplectic energy equals~$\pi$.
\item[(iii)] All discs are simple.
\end{itemize}
\end{prop}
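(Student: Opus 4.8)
The plan is to verify each of the three properties by comparing a disc $u\in\WW$ with a standard disc $\utsw$ of the same level $\bft$, using the homological condition (M1) together with the normalisations built into the family $L^{\bft}$ of Lagrangian cylinders. Throughout I would work in the half-symplectisation description of $W$, where $\omega$ is exact, $\omega=\rmd\lambda$ with $\lambda=\rme^a\alpha$ on the symplectisation part and $\lambda=i_Y\omega_0$ on the handle; the key point is that everything in sight — the symplectic form, the Maslov class, and the Liouville form — is determined by homological data, so both quantities (i) and (ii) can be read off from the reference disc $\utsw$.

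For (ii), the symplectic energy $\int_{\D}u^*\omega$ depends only on the relative homology class $[u]\in H_2(W,L^{\bft}\cap W)$, because $\omega$ is exact and $\lambda$ vanishes on $L^{\bft}$ (each factor of $L^{\bft}$ is either a coordinate plane through the origin or the circle $\partial D^2_{z_n}$, on all of which the relevant primitive restricts trivially by the explicit form of $Y$ and hence of $i_Y\omega_0$); so by (M1) it equals $\int_{\D}(\utsw)^*\omega$. A direct computation with $\utsw(z)=(0,w;\bfs+\rmi\bft;z)$ shows that only the $z_n$-factor contributes, giving $\int_{\D}\rmd x_n\wedge\rmd y_n=\pi$. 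For (i), the Maslov index is likewise a homotopy invariant of the pair $(u,u|_{\partial\D})$ relative to the Lagrangian, so it too is computed from $\utsw$; the standard disc $z\mapsto z$ in the last $\C$-factor bounding the unit circle has Maslov index $2$, while the remaining product factors are constant and contribute $0$, so the total Maslov index is $2$. Here one uses that the Lagrangian $L^{\bft}$ splits as a product compatibly with the splitting $\C\times\C^{n-1}\times\C$, so the Maslov index is additive over the factors.

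For (iii), I would argue that a disc in $\WW$ cannot be multiply covered. Suppose $u=\tilde u\circ\varphi$ with $\varphi\co\D\to\D$ a branched cover of degree $k\geq 2$ and $\tilde u$ simple. Then the energy of $\tilde u$ would be $\pi/k<\pi$, and its Maslov index would be $2/k$, which is not an integer for $k\geq 2$ — already a contradiction, since the Maslov index of any disc with Lagrangian boundary is an integer. Alternatively one can phrase this via the constraint (M2): the three marked-point conditions $u(\rmi^k)\in L^{\bft}\cap\{z_n=\rmi^k\}$ pin down the parametrisation and are incompatible with factoring through a non-trivial cover. Either way the conclusion is immediate once (i) and (ii) are in hand.

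The main obstacle is really bookkeeping rather than a deep difficulty: one must check carefully that the Liouville form $\lambda$ (in its various incarnations across the three regions of $W$ glued together in Section~\ref{subsection:cobordism}) restricts to zero — or at least to an exact form — on $L^{\bft}\cap W$, so that energy and Maslov index are genuinely homological, and one must confirm that the reference discs $\utsw$ actually lie in $W$ and realise the class in (M1). Since the Lagrangian cylinders $L^{\bft}$ lie in the region $|v|\leq R-\varepsilon$ where $Y=Y_1$ is radial and $J=\rmi$, the restriction of $i_{Y_1}\omega_0$ to the relevant coordinate planes and to $\partial D^2_{z_n}$ can be computed explicitly and is seen to vanish, so this step goes through; it is exactly the analogue of \cite[Section~3.3]{geze}, and I would simply cite that for the details once the set-up has been matched.
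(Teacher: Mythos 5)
Your overall strategy matches the paper's: the paper's ``proof'' is a single sentence deferring to \cite[Section~3.3]{geze}, and your reconstruction (reduce to the standard disc $\utsw$ via the homological condition (M1), then compute explicitly, then argue simplicity) is exactly in that spirit. However, two of your intermediate claims are wrong as stated and need to be repaired.

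First, for (ii) you assert that the Liouville form $\lambda$ ``vanishes on $L^{\bft}$ --- or at least to an exact form.'' If that were true the energy of \emph{every} disc with boundary on $L^{\bft}$ would be zero by Stokes, not $\pi$. In fact on the handle $\lambda=i_{Y_1}\omega_0=-w\,\rmd v+\frac{1}{2}(\bfx\,\rmd\bfy-\bfy\,\rmd\bfx)$, and its restriction to $L^{\bft}$ has a non-trivial period $\pi$ over the circle factor $\partial D^2_{z_n}$ --- this period is precisely where the energy $\pi$ comes from. The correct statement is the weaker one you started with: since $\omega$ is exact, $\int_{\D}u^*\omega=\int_{\partial\D}(u|_{\partial\D})^*\lambda$ is a homotopy invariant of the boundary loop in $L^{\bft}$, and (M1) fixes that loop's class. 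Drop the vanishing/exactness claim and the computation goes through.

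Second, your integrality argument for (iii) has a gap at $k=2$: if $u=\tilde u\circ\varphi$ with $\varphi$ of degree $2$, then $\tilde u$ would have Maslov index $2/2=1$, which \emph{is} an integer, so there is no contradiction on those grounds. What you actually need is that the minimal Maslov number of $L^{\bft}\cong\R^n\times S^1$ is $2$, which excludes Maslov index $1$; or, more robustly, observe that by (M1) the boundary $u|_{\partial\D}$ projects to the circle factor $\partial D^2_{z_n}$ with degree $1$, so it cannot factor through a degree-$k$ cover of $\partial\D$ for any $k\geq2$. Your fallback appeal to (M2) does not by itself rule out multiple covers --- it only normalises the parametrisation --- so it should not be presented as an independent argument.
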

\subsection{Compactness}
The bubbling of spheres is impossible in an exact symplectic manifold.
Both bubbling of finite energy planes and breaking are prevented by the
existence of a global non-singular Liouville vector field
(and the resulting maximum principle) and the assumption that there be
no contractible periodic Reeb orbits. The bubbling of discs at
boundary points is ruled out by the Lagrangian boundary condition
(which fixes the energy) and the homotopical condition (M2).
This guarantees the compactness of~$\WW$. For more details
see~\cite[Section~4]{geze}.
\subsection{Transversality}
If a holomorphic disc $u\in\WW$ has its image $u(\D)$ contained entirely
in the part of the handle determined by $|v|\leq R-\varepsilon$,
where $J=\rmi$, then $u$ can be written globally in terms
of holomorphic component functions $(v+\rmi w,\bfh,z_n)$.
The boundary conditions $v=0$, $\im(\bfh)=\bft$, $|z_n|=1$,
and $z_n(\rmi^k)=\rmi^k$ for $k=0,1,2$ force $u=\utsw$ for
suitable $\bfs,w$. This follows from a simple application
of the maximum and argument principles, cf.\ the proof of
Lemma~9 in~\cite{geze}. So for discs in this region, transversality is
obvious.

Holomorphic discs that leave this region have to pass through
a domain where $J$ may be chosen generic, which ensures
regularity for these discs, too.

No holomorphic disc can touch the $J$-convex boundary of $W$
at an interior point, so the boundary of the moduli space
is made up of discs $u$ with $u(\partial\D)\cap\partial W\neq\emptyset$.
We want to show that any such disc has to be standard. One may then
conclude that $\WW$ is a manifold (with boundary) of the
expected dimension $2n-1$, determined by the Maslov index.

The following slightly stronger statement will
be relevant for computing the degree of an evaluation map on
the moduli space later on. It is understood that $R$
has been chosen large.

\begin{lem}
\label{lem:non-standard}
Any non-standard disc in $\WW$ 
satisfies $w^2+|\bfh|^2\leq 8$ on~$\partial\D$.
\end{lem}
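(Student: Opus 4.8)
The plan is to exploit the plurisubharmonicity of $w^{2}+|\bfh|^{2}$ and the fact that $u$ is $\rmi$-holomorphic near $\partial\D$ through a maximum-principle argument, and to extract the bound $8$ from the energy of the discs in $\WW$ together with the hypothesis that $u$ is non-standard.

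First I would note two elementary facts. The function $\mathbf{p}:=w^{2}+|\bfh|^{2}$ is plurisubharmonic on $\C^{n+1}$: in the complex coordinate $v+\rmi w$ one has $w^{2}=\bigl(\im(v+\rmi w)\bigr)^{2}$, a function of Laplacian $2$, hence subharmonic, while $|\bfh|^{2}=\sum_{j}|h_{j}|^{2}$ is plurisubharmonic, so the complex Hessian of $\mathbf{p}$ is positive semidefinite; consequently $\mathbf{p}\circ u$ is subharmonic on any open subset of $\D$ that $u$ sends into the region $\{|v|\le R-\varepsilon\}$, where $J=\rmi$. Secondly, a non-standard disc cannot have its image entirely in $\{|v|\le R-\varepsilon\}$ --- this is exactly the content of the transversality discussion above, in which the maximum and argument principles force such a disc to equal a standard disc $\utsw$ --- so $u$ meets $\{|v|=R-\varepsilon\}$ at an interior point of $\D$.

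Next, since $u(\partial\D)\subset L^{\bft}\cap W$ lies in the slice $\{v=0\}$, continuity of $v\circ u$ gives a collar of $\partial\D$ in $\D$ sent by $u$ into $\{|v|<R-\varepsilon\}$; let $\Omega_{0}$ be the connected component of the preimage of the locus $\{J=\rmi\}$ that contains $\partial\D$. On $\Omega_{0}$ the disc $u$ has holomorphic component functions $(v+\rmi w,\bfh,z_{n})$, so $\mathbf{p}\circ u$ is subharmonic on $\Omega_{0}$; and, since $u$ cannot touch the $J$-convex boundary $M_{R}$ at an interior point, the part of $\partial\Omega_{0}$ lying in the interior of $\D$ is a compact $1$-manifold $\gamma$ that $u$ maps into $\{|v|=R-\varepsilon\}\cup M_{1}$. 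By the maximum principle, $\max_{\overline{\Omega_{0}}}(\mathbf{p}\circ u)$ is attained on $\partial\D\cup\gamma$, so it is enough to bound $\mathbf{p}\circ u$ by $8$ along $\gamma$. Where $u(\gamma)$ hits the thin handle $M_{1}=\{w^{2}+\tfrac12(|\bfx|^{2}+|\bfy|^{2})=1\}$ this is easy, since there $w^{2}\le1$ and $|\bfh|^{2}\le|\bfx|^{2}+|\bfy|^{2}\le2$, whence $\mathbf{p}\le3$.

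The case in which $\gamma$ meets $\{|v|=R-\varepsilon\}$ is the crux, and here I would use that the symplectic energy of $u$ equals $\pi$. Stokes' theorem applied to a primitive $\lambda_{0}$ of $\omega_{0}$ over $\partial\Omega_{0}=\partial\D\sqcup(-\gamma)$ gives that the area of $u$ over $\Omega_{0}$ equals $\int_{\partial\D}u^{*}\lambda_{0}-\int_{\gamma}u^{*}\lambda_{0}$; the boundary conditions $v=0$, $\im\bfh=\bft$, $|z_{n}|=1$ and the homological normalisation (M1), which pins the winding number of the $z_{n}$-component of $u|_{\partial\D}$ to $1$, evaluate $\int_{\partial\D}u^{*}\lambda_{0}=\pi$. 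Hence the area of $u$ over $\Omega_{0}$ equals $\pi-\int_{\gamma}u^{*}\lambda_{0}$, which is at most $\pi$ and, since $u$ is non-standard, strictly less than $\pi$. Because $R$ is large, for the $\rmi$-holomorphic component $v+\rmi w$ to run from $\re=0$ on $\partial\D$ all the way to $\re=\pm(R-\varepsilon)$ on $\gamma$ with so little area available forces $u(\gamma)$ into a fixed-size neighbourhood of $M_{1}$, by a monotonicity argument; tracking the constants there, and using $|z_{n}|^{2}\le2$, yields $\mathbf{p}\circ u\le8$ along $\gamma$, hence on $\partial\D$. I expect this energy-confinement estimate --- quantifying how the largeness of $R$ pushes the exit locus $\gamma$ near $M_{1}$, and pinning the constant down to $8$ --- to be the main obstacle.
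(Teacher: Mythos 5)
Your plan is structurally different from the paper's proof, and it has two genuine gaps.

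The first gap is in the maximum principle step. You claim that since $\mathbf{p}\circ u$ is subharmonic on $\Omega_0$, its maximum on $\overline{\Omega_0}$ is attained on $\partial\D\cup\gamma$, ``so it is enough to bound $\mathbf{p}\circ u$ by $8$ along $\gamma$.'' This is a non-sequitur: the quantity you want to bound lives exactly on $\partial\D$, which is part of the boundary of $\Omega_0$, so the maximum principle applied to $\Omega_0$ tells you nothing about the values there. You would need to double $\Omega_0$ by Schwarz reflection across $\partial\D$ (using $v=0$, $\im(\bfh)=\bft$ on $\partial\D$) so that $\partial\D$ becomes interior; but then the catch is that $\mathbf{p}=w^2+|\bfh|^2$ is \emph{not} invariant under that reflection unless $\bft=0$, since the reflection sends $\im(\bfh)\mapsto 2\bft-\im(\bfh)$. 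One can repair this by replacing $\mathbf{p}$ with the invariant function $w^2+|\bfh-\rmi\bft|^2$, but then the bound you would need on $\gamma$ changes, and on the $M_1$-part of $\gamma$ you no longer control $|\im(\bfh)-\bft|$ at all.

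The second gap is the claimed confinement of $u(\gamma)$ near $M_1$. The energy bound does not force this: an $\rmi$-holomorphic map can send a thin strip out to $|v|=R-\varepsilon$ with arbitrarily small area, while the values of $w$ and $\bfh$ there remain uncontrolled. So the ``energy-confinement estimate'' you flag as the main obstacle is not merely an obstacle --- the inference is false. The paper's proof sidesteps both issues by doing something more direct: after Schwarz-reflecting $g=(v+\rmi w,\bfh)$ on $\oG$ to a map on a compact Riemann surface $S$, it picks an arbitrary point $s_0\in\partial\D$, observes that $g(s_0)$ has distance at least $r_0:=|(w,\bfh)(s_0)|-\sqrt 2$ from $g(\partial S)$ (since on $\partial G$ one has either $|v|=R-\varepsilon$, far away for $R$ large, or $w^2+|\bfh|^2\le 2$, inside a ball of radius $\sqrt2$), and then applies the monotonicity lemma: for $r<r_0$, $\mathrm{area}(g(S)\cap\oB_r)\ge\pi r^2$, hence $\mathrm{area}(g(\oG)\cap\oB_r)\ge\pi r^2/2$ by symmetry, which must be $\le\pi$. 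This forces $r_0\le\sqrt 2$, and so $\mathbf{p}(s_0)=(\sqrt2+r_0)^2\le 8$. Note that this applies the monotonicity lemma centred at the image of the boundary point itself, not along $\gamma$, which is what makes the constant $8=(\sqrt2+\sqrt2)^2$ drop out cleanly.
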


\begin{proof}
Consider the open subset $G\subset\D$ of points mapping into
that part of the handle where $|v|< R-\varepsilon$; this
set $G$ contains $\partial\D$. On the closure $\oG$
we consider the holomorphic components $g:=(v+\rmi w,\bfh)$
of~$u$; we suppress the component~$z_n$. Notice that for a standard disc
these components are constant.

If $\oG=\D$, then $u$ is standard by the argument preceding this lemma.

Let us now assume that $\oG$ is not all of~$\D$. On the
topological boundary $\partial G$ of $G$ in $\D$ we have
$|v|=R-\varepsilon$ or $w^2+|\bfh|^2/2+|z_n|^2/2=1$;
in the latter case this means $w^2+|\bfh|^2\leq 2$.
On $\partial\D$ we have $v=0$ and $\im(\bfh)=\bft$. This allows us to
extend the holomorphic map $g$ by Schwarz reflection
to the compact subset $S$ of the Riemann sphere given as the union
of $\oG$ and its reflected copy, cf.\ the proof of \cite[Lemma~11]{geze}.

Choose a point $s_0$ in $\partial\D$ with $|(w,\bfh)(s_0)|^2
=\bigl(\sqrt{2}+r_0\bigr)^2$. We want to show that $r_0\leq\sqrt{2}$.
The closed $2n$-ball $\oB_r$ of radius $r$ about the
point $g(s_0)$ does not intersect $g(\partial S)$ for $r<r_0$.
The monotonicity lemma~\cite[Theorem~II.1.3]{humm97} then says that the area
of $g(S)\cap\oB_r$ is bounded from below by $\pi r^2$.
So the area of $g(\oG)\cap\oB_r$ is bounded
from below by $\pi r^2/2$, and above by the total energy $\pi$ of~$u$.
It follows that $r\leq\sqrt{2}$. As a consequence,
the same inequality must hold for~$r_0$.
\end{proof}

\begin{rem}
\label{rem:non-standard}
With similar estimates one can show that in the part of the
handle where $|v|<R/2$, say, a neighbourhood of the upper boundary of the
handle contains standard discs only.
\end{rem}

An orientation of $\WW$ is determined as in \cite[Section~5.3]{geze}.
This will allow us to speak of the degree of the evaluation map
on~$\WW$.
\section{Proof of Theorem~\ref{thm:sphere}}
Given a contact manifold $(M,\xi=\ker\alpha)$ satisfying the
assumptions of Theorem~\ref{thm:sphere}, and a standard sphere
$S^{2n}\subset M$, we can build a symplectic
cobordism as in Section~\ref{subsection:cobordism}.
Write $\Sb\subset M_R$ for the belt sphere given as the intersection of
the upper boundary of the thick handle with the hyperplane
$\{v=0\}$ in~$\C^{n+1}$. This belt sphere can be described
explicitly in the form
\[ \Sb=\bigl\{ (v,w,\bfh,z_n)\co v=0,\; w^2+
\frac{1}{2}\bigl(|\bfh|^2+|z_n|^2\bigr)=(R-\delta)^2\bigr\},\]
where $R-\delta$ was the level near $R$ used to define the
thick handle in a neighbourhood of~$v=0$.
The pair $(M,S^{2n})$ is
diffeomorphic to $(M_R,\Sb)$, so it suffices to show that
$\Sb$ is homologically trivial in~$M_R$.

We are going to show that
the composition $f_0:=\mathrm{pr}_Y\circ\ev\co\WW\times\D\rightarrow M_R$
of the evaluation map
\[ \ev\co\WW\times\D\longrightarrow W,\;\;\; (u,z)\longmapsto u(z)\]
and the projection $\mathrm{pr}_Y$ of $W$ onto $M_R$ along the flow
lines of Liouville vector field~$Y$
sends the boundary $\partial(\WW\times\D)$ with degree~$1$
onto~$\Sb$. This implies the homological statement we want to prove,
and hence Theorem~\ref{thm:sphere}.

The boundary of $\WW\times\D$ is made up of the parts
$\partial\WW\times\D$ and $\WW\times\partial\D$.
The boundary $\partial\WW$ of the moduli space $\WW$ consists
of standard discs $\utsw$ whose boundary $\utsw(\partial\D)$ lies
in~$\Sb$, i.e.\ where $w^2+(|\bfh|^2+1)/2=(R-\delta)^2$.
The images $\utsw(\D)$ lie in the hyperplane
$\{v=0\}$. So when we project these images along the flow lines of
the Liouville vector field $Y$ to~$\Sb$, they
foliate a neighbourhood $S^{2n-2}\times D^2\subset\Sb$ of
\[ S^{2n-2}\times\{0\}=\Sb\cap\{z_n=0\}.\]

The part $\WW\times\partial\D$ of the boundary is mapped by the
evaluation map to points on some~$L^{\bft}$, where $v=0$ and $|z_n|=1$. Since
$Y$ is radial in $(w,\bfh,z_n)$-direction near $v=0$,
the points in $f_0(\WW\times\partial\D)$ satisfy $|z_n|\geq 1$.
In particular, this set is disjoint from a neighbourhood
of $S^{2n-2}\subset\Sb$.

It follows, as claimed, that the restriction
\[ f_0|_{\partial(\WW\times\D)}\co
\partial(\WW\times\D)\longrightarrow\Sb\]
is of degree~$1$.
\section{Proof of Theorem~\ref{thm:connected}}
\subsection{Deforming the evaluation map}
The first step in the proof of Theorem~\ref{thm:connected},
as in the proof of Theorem~\ref{thm:sphere},
is to construct a continuous map $\WW\times\D\rightarrow M_R$
sending $\partial(\WW\times\D)$ with degree~$1$ onto~$\Sb$. This time,
however, we should like to have more control over the behaviour
of the map near the boundary $\partial(\WW\times\D)$. This greatly
simplifies the subsequent topological arguments.

\vspace{1mm}

(1) Instead of projecting the whole image $\ev(\WW\times\D)$ along
the flow lines of $Y$ to~$M_R$, we first consider only the
image of the boundary $\partial(\WW\times\D)$ under~$\ev$.
For ease of notation, we write $\partial_+:=\ev(\partial\WW\times\D)$
and $\partial_-:=\ev(\WW\times\partial\D)$, so that the image in question
is given as the union $\partial_+\cup\partial_-$.

This image lies in the intersection of the
handle with the hyperplane $\{ v=0\}$. On that intersection,
we can define a map, homotopic to the identity, which
sends each segment of a flow line of $Y$ starting in a point of
$\partial_+$ and ending in a point of $\Sb$ to its
endpoint. Notice that under this requirement, points in
the set $\partial_+\cap\partial_-$, which by Lemma~\ref{lem:non-standard}
equals $\ev(\partial\WW\times\partial\D)$, do not move. So we may require
that the map do not move any points in $\partial_-$,
and no points outside a small neighbourhood of $\partial_+$
inside the full image of $\WW\times\D$ under~$\ev$.
Using $v$ as a homotopy parameter, we can extend this map to $W$,
equal to the identity outside a small neighbourhood of $\{ v=0\}$.
In this way we can ensure by Remark~\ref{rem:non-standard}
that points in the image of any
non-standard discs do not move. By postcomposing $\ev$ with this
map, we obtain a map $f_1\co\WW\times\D\rightarrow W$ whose
effect on $\partial_+$ is the same as that of the map $f_0$
considered in the proof of Theorem~\ref{thm:sphere}.

\vspace{1mm}

(2) Next we want to find a continuous map $f_2\co\WW\times\D
\rightarrow W$ that sends $\partial(\WW\times\D)$ with degree~$1$
onto~$\Sb$ (for the appropriate choice
of orientations). In contrast with the map $f_0$ considered
previously, we do not yet want any points in the
interior of $\WW\times\D$ to map to~$\Sb$ (or in fact~$M_R$).
Since $\WW$ is compact, we can choose a small annular neighbourhood
$A=[a,1]\times S^1$ of $\partial\D$ in $\D$ such that $f_1(\WW\times A)$,
when projected to $M_R$ along the flow lines of~$Y$,
is contained in a prescribed small neighbourhood of~$\Sb$.
For $r\in [a,1]$ and $\varphi\in S^1$ write $\gamma_{u,r,\varphi}$ for
the flow line of $Y$ through the point $f_1(u,r,\varphi)$,
reparametrised linearly such that $\gamma_{u,r,\varphi}(a)=
f_1(u,r,\varphi)$ and $\gamma_{u,r,\varphi}(1)$ is the point
on this flow line that lies in~$M_R$. Now define
$f_2\co\WW\times\D\rightarrow W$ by the requirements
\[ f_2=f_1\;\;\text{on}\;\;\WW\times\bigl(\D\setminus\Int(A)\bigr)\]
and
\[ f_2(u,r,\varphi)=\gamma_{u,r,\varphi}(r)\;\;\text{for}\;\;
(u,r,\varphi)\in\WW\times[a,1]\times S^1.\]
This map has the same effect on $\partial(\WW\times\D)$ as~$f_0$.

\vspace{1mm}

(3) The deformations we have considered thus far keep points in $\{v=0\}$
inside that hyperplane. Hence, if $u$ is a standard disc,
then $f_2(u,z)$ will be a point in $W\cap\{v=0\}$ for
any $z\in\D$, and a point in $M_R\cap\{v=0\}$ if and only
if $u\in\partial\WW$ or $z\in\partial\D$.

If $u$ is a non-standard disc, then $u(A)$ will be far away
from $M_R$ by Remark~\ref{rem:non-standard}. As a consequence,
$f_2(u\times A)$ will meet $M_R$
at a steep angle, i.e.\ the tangent vectors
have a small $v$-component compared to a large $(w,\bfh,z_n)$-component.

Hence, by a homotopy of $W$ that keeps $M_R$ fixed
and moves the interior of the handle near $\{v=0\}$
in the direction of negative~$v$, we obtain a new map
$f_3\co\WW\times\D\rightarrow W$
with the property that all points in $f_3(\Int(\WW)\times\Int(A))$
have negative $v$-component. On $\partial(\WW\times\D)$,
the map has not been changed.

\vspace{1mm}

(4) Now postcompose $f_3$ with the projection $\mathrm{pr}_Y$
of $W$ onto $M_R$ along the flow lines of~$Y$ to obtain a map
$f_4$ from $\WW\times\D$ to~$M_R$. Again, this
has no further effect on $\partial(\WW\times\D)$, but we are
now guaranteed that a neighbourhood of that boundary
inside $\WW\times\D$ is mapped by $f_4$ entirely to one side of the belt
sphere~$\Sb$.

\vspace{1mm}

(5) Finally, define
\[ f\co\bigl(\WW\times\D,\partial(\WW\times\D)\bigr)\longrightarrow
(M_R,\Sb)\]
as a smooth map homotopic to $f_4$ and transverse to~$\Sb$.
Then the preimage $f^{-1}(\Sb)$ will
consist of $\partial(\WW\times\D)$, which is mapped to
$\Sb$ with degree~$1$, and a collection of smooth hypersurfaces in the
interior of~$\WW\times\D$.

If $\WW$ is not connected, it will have one component
$\WW_{\partial}$ with boundary ---
the one containing the standard discs --- and some closed components.
The boundary component $\partial(\WW_{\partial}\times\D)$
of $\WW\times\D$ is the one that maps with degree~$1$ onto~$\Sb$; the
other boundary components map with degree zero to~$\Sb$, since their
image lies in the subset $\Sb\setminus S^{2n-2}\times\{0\}$.

\vspace{1mm}

For $u\in\WW$, the image point $u(1)$ lies in the $(2n-1)$-cell
\[ \bigl\{ (v+\rmi w,\bfs+\rmi\bft,z)\in\C\times\C^{n-1}\times\C\co
v=0,\; z=1,\; w^2+\frac{1}{2}\bigl(|\bfs|^2+|\bft|^2+1\bigr)\leq
(R-\delta)^2\bigr\}. \]
It follows that the image $f(\WW\times\{1\})$ is contained
in a $(2n-1)$-dimensional cell inside~$\Sb$.
\subsection{The mapping degree}
Write $M_i'$, $i=1,2$, for the closures of the connected components of
$M_R\setminus\Sb$. We choose the numbering such that
a collar neighbourhood of $\partial(\WW\times\D)$ is
sent to a neighbourhood of $\Sb=\partial M_1'$ in~$M_1'$.
Set $V=\WW_{\partial}\times\D$, where $\WW_{\partial}$ was defined
in point~(5) of the forgoing section,
and continue to write $f$ for~$f|_V$.
Set $V_i=f^{-1}(M_i')$. Then
\[ V=V_1\cup_{f^{-1}(\Sb)} V_2. \]
We orient $\Sb$ as the boundary of $M_1'$, and
$V$ such that its boundary is mapped by
$f$ with degree~$1$ onto~$\Sb$. Then
$f_i:=f|_{V_i}\co(V_i,\partial V_i)\rightarrow (M_i',\partial M_i')$
has a well-defined mapping degree, and these degrees can also
be computed as the degree of the restriction to the
boundary, $f_i|_{\partial V_i}\co \partial V_i\rightarrow\partial M_i'$.

\begin{lem}
\label{lem:degrees}
The mapping degrees of $f_1$ and $f_2$ satisfy
\[ \deg f_1-\deg f_2=1.\]
\end{lem}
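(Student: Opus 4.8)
The plan is to compute each mapping degree $\deg f_i$ by restricting $f_i$ to the boundary, as recalled just before the statement, and then to observe that the part of $f^{-1}(\Sb)$ lying in the interior of $V$ contributes equally to $\deg f_1$ and to $\deg f_2$, so that these contributions cancel in the difference and only the outer boundary $\partial(\WW_{\partial}\times\D)$ survives, carrying its degree~$1$.

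First I would make the decomposition of $f^{-1}(\Sb)$ precise. By transversality of $f$ to $\Sb$, together with the normalisation arranged in step~(4) (a neighbourhood of $\partial(\WW\times\D)$ inside $\WW\times\D$ is mapped to a single side of $\Sb$), the preimage $f^{-1}(\Sb)\cap V$ decomposes as a disjoint union $\partial(\WW_{\partial}\times\D)\sqcup Z$, where $Z$ is a closed hypersurface in the interior of $V=\WW_{\partial}\times\D$ (possibly empty). Since the numbering was chosen so that the collar of $\partial(\WW_{\partial}\times\D)$ maps into $M_1'$, we obtain $\partial V_1=\partial(\WW_{\partial}\times\D)\sqcup Z$ and $\partial V_2=Z$, where $V_1,V_2$ carry the orientation restricted from $V$ and each $\partial V_i$ the induced boundary orientation. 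The essential point is that the two copies of $Z$, regarded as boundary pieces of $V_1$ and of $V_2$ respectively, carry \emph{opposite} orientations, because $V_1$ and $V_2$ lie on opposite sides of $Z$ in $V$; similarly $\partial M_2'=-\Sb$ once $\Sb$ is oriented as $\partial M_1'$.

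Now I would apply the boundary-degree formula to each $f_i$. For $i=1$ this gives $\deg f_1=\deg\bigl(f|_{\partial(\WW_{\partial}\times\D)}\co\partial(\WW_{\partial}\times\D)\to\Sb\bigr)+d$, where $d$ denotes the degree of $f|_Z\co Z\to\Sb$ computed with $Z$ oriented as part of $\partial V_1$ and $\Sb$ oriented as $\partial M_1'$; by the degree computation in the proof of Theorem~\ref{thm:sphere} (cf.\ point~(5)), the first term equals~$1$, so $\deg f_1=1+d$. For $i=2$ the same formula gives $\deg f_2=\deg\bigl(f|_Z\bigr)$, but now with $Z$ oriented as part of $\partial V_2$ and $\Sb$ as $\partial M_2'$ — that is, with both source and target orientations reversed relative to the computation of $d$. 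Since a mapping degree between compact oriented manifolds with boundary is unchanged under a simultaneous reversal of source and target orientations, this equals $d$ again, and therefore $\deg f_1-\deg f_2=(1+d)-d=1$.

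The step I would be most careful about is the orientation bookkeeping of the second and third paragraphs — in particular the claim that $\partial(\WW_{\partial}\times\D)$ appears in $\partial V_1$ but not in $\partial V_2$, which rests on the fact that, after step~(4), a whole neighbourhood of $\partial(\WW\times\D)$ is mapped to one side of $\Sb$. Granting that, the remaining input is the standard fact that the degree of a map of compact oriented manifolds with boundary equals the degree of its restriction to the boundary, which applies here because the common target $\Sb\cong S^{2n}$ is connected.
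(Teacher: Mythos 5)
Your proposal is correct and takes essentially the same approach as the paper: the paper's proof is a terse one-liner recording the decompositions $\partial M_1'=-\partial M_2'$ and $\partial V_1=-\partial V_2\sqcup\partial V$ together with the degree-$1$ statement for $\partial V$, and you have simply spelled out the same orientation bookkeeping (with $Z=\partial V_2$) and the cancellation of the interior contribution in the difference.
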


\begin{proof}
We have $\partial M_1'=-\partial M_2'$ and $\partial V_1=-\partial V_2
\sqcup\partial V$, and $\partial V$ is mapped with degree~$1$
onto~$\partial M_1'$.
\end{proof}

In particular, the degree of at least one of $f_1,f_2$ is non-zero.
Together with the following lemma, this yields
information about the fundamental group. This lemma is
formulated for manifolds without boundary as an exercise
in \cite[p.~130]{hirs76}, so for the reader's convenience we include
the simple proof.

\begin{lem}
\label{lem:Hirsch}
Let $g\co (P,\partial P)\rightarrow (Q,\partial Q)$ be a map
between compact, oriented $m$-dimensional manifolds. We assume
that $Q$ and $\partial Q$ are connected, so that the mapping degree
$\deg g$ is defined and equal to $\deg g|_{\partial P}$. Likewise,
$P$ is assumed to be connected, and we define the
fundamental groups $\pi_1(Q),\pi_1(P)$ with respect to respective base
points $q_0\in Q$ and $p_0\in g^{-1}(q_0)$.

Then $g_*\bigl(\pi_1(P)\bigr)\subset\pi_1(Q)$
is a subgroup whose index divides $\deg g$. In particular:
\begin{itemize}
\item[(i)] If $\deg g\neq 0$ and $g_*\bigl(\pi_1(P)\bigr)$ is trivial,
then $\pi_1(Q)$ is a finite group.
\item[(ii)] If $\deg g=\pm 1$, then $g_*\co\pi_1(P)\rightarrow
\pi_1(Q)$ is onto.
\end{itemize}
\end{lem}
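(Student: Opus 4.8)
The plan is to analyse the covering space of $Q$ corresponding to the subgroup $H:=g_*(\pi_1(P))\subset\pi_1(Q)$ and count preimages of a regular value. Let $p\co\widetilde Q\to Q$ be the covering (with $\widetilde Q$ connected) classified by $H$; its number of sheets is exactly the index $[\pi_1(Q):H]$, which is what we want to bound. Since $g_*(\pi_1(P))\subset H$, the map $g\co P\to Q$ lifts to a map $\widetilde g\co P\to\widetilde Q$ with $p\circ\widetilde g=g$; this is the standard lifting criterion, using that $P$ is connected and locally nice (a manifold) and that base points are chosen compatibly. Note $\partial\widetilde Q=p^{-1}(\partial Q)$, and I will want to arrange things so that the boundary behaves well under degree computations.

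Next I would pick a point $q_0$ in the interior of $Q$ that is a regular value of $g$ (hence of $\widetilde g$, after a homotopy of $g$ rel nothing if necessary — or simply choose $q_0$ regular for both $g$ and $p$, which is automatic since $p$ is a covering). Then $\deg g = \sum_{x\in g^{-1}(q_0)}\pm 1$, the sign being the local orientation comparison. The fibre $p^{-1}(q_0)$ has exactly $[\pi_1(Q):H]$ points, say $\widetilde q_1,\dots,\widetilde q_k$, and $g^{-1}(q_0)=\widetilde g^{-1}(p^{-1}(q_0))=\bigsqcup_j \widetilde g^{-1}(\widetilde q_j)$. The crucial point is that each set $\widetilde g^{-1}(\widetilde q_j)$ is non-empty: this is because $\widetilde g_*(\pi_1(P))$ surjects onto $\pi_1(\widetilde Q,\widetilde q_j)$ — indeed $H=g_*(\pi_1(P))$ corresponds under the covering to the full $\pi_1(\widetilde Q)$, so $\widetilde g$ has the property that $\widetilde g_*$ is surjective, hence $\widetilde g$ hits every component-relevant part; more concretely, any two points of $\widetilde Q$ are joined by a path, and lifting a suitable loop in $Q$ based at $q_0$ that comes from $\pi_1(P)$ realizes the deck transformation moving $\widetilde q_1$ to $\widetilde q_j$, so a preimage of $\widetilde q_1$ maps to a preimage of $\widetilde q_j$ after composing with the corresponding self-map — here I should instead argue directly that $\widetilde g$, having $\pi_1$-image all of $\pi_1(\widetilde Q)$, must have image meeting every fibre point, because the complement of a single point in the fibre would let $\widetilde g$ factor through a further nontrivial cover, contradicting surjectivity. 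I expect \emph{this surjectivity-of-fibre-hits step} to be the main obstacle to state cleanly, and the cleanest route is: the deck group $\mathrm{Deck}(\widetilde Q/Q)\cong \pi_1(Q)/\langle\!\langle H\rangle\!\rangle$ does not act here (since $H$ need not be normal), so instead I will observe that lifts of $g$ are in bijection with the fibre $p^{-1}(q_0)$, and picking the lift $\widetilde g$ through a chosen $\widetilde q_1$, the key is that \emph{the same} map $\widetilde g$ must hit every $\widetilde q_j$: given $\widetilde q_j$, pick any path in $\widetilde Q$ from $\widetilde q_1$ to $\widetilde q_j$; its image is a path in $Q$ from $q_0$ to $q_0$, i.e.\ a loop $\ell$; since $[\ell]\in H=g_*\pi_1(P)$ there is a loop $c$ in $P$ with $g\circ c\simeq \ell$; lifting, $\widetilde g\circ c$ is a path from a preimage of $q_0$ to a preimage of $q_0$ lying over $\ell$, and adjusting the basepoint of $c$ this path runs from $\widetilde q_1$'s preimage to $\widetilde q_j$'s preimage — wait, one must be careful the lift of $g\circ c$ equals $\widetilde g\circ c$, which holds by uniqueness of path lifting once basepoints agree. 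Thus each fibre point has a preimage under $\widetilde g$.

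Granting that, for each regular value $\widetilde q_j$ the signed count $\sum_{x\in\widetilde g^{-1}(\widetilde q_j)}\pm1$ equals $\deg\widetilde g$, which is independent of $j$; call it $d$. Summing over $j$ gives $\deg g=\sum_{x\in g^{-1}(q_0)}\pm 1 = \sum_{j=1}^k \sum_{x\in\widetilde g^{-1}(\widetilde q_j)}\pm 1 = k\cdot d$, where $k=[\pi_1(Q):H]$. Hence $k\mid\deg g$, which is the assertion. For the boundary-degree remark: when $\partial Q$ is connected, one runs the identical argument with a regular value on $\partial Q$, using that $\deg g=\deg(g|_{\partial P})$ by the standard cobordism argument, and that the restricted cover $p|\co \partial\widetilde Q\to\partial Q$ need only have its total sheet count bounded below by $k$ (it could be disconnected), which still gives $k\mid\deg g$. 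Finally the two consequences are immediate: if $\deg g\neq0$ then $k\mid\deg g$ forces $k$ finite, and if moreover $H=g_*\pi_1(P)$ is trivial then $k=[\pi_1(Q):1]=|\pi_1(Q)|$ is finite, proving~(i); if $\deg g=\pm1$ then $k\mid 1$ so $k=1$, i.e.\ $H=\pi_1(Q)$, proving~(ii).
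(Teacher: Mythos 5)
Your strategy is the same as the paper's: pass to the covering $p\colon\widetilde Q\to Q$ classified by $H=g_*\pi_1(P)$, lift $g$ to $\tilde g$, and deduce the divisibility from the multiplicativity of degree for $g=p\circ\tilde g$. The paper simply invokes the homological definition of degree (the composition $H_m(P)\to H_m(\widetilde Q)\to H_m(Q)$ multiplies degrees, where $\deg p$ equals the number of sheets), whereas you opt for a geometric preimage count; both are fine.

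However, the sub-claim you call ``the crucial point'' --- that $\tilde g$ hits \emph{every} fibre point --- is both unnecessary and argued incorrectly. Unnecessary: the signed count $\sum_{x\in\tilde g^{-1}(\tilde q_j)}\pm 1$ equals $\deg\tilde g$ for \emph{every} regular value $\tilde q_j$, simply because the degree of a proper map (here proper since $P$ is compact) is independent of the regular value at which one counts. Nothing requires the preimage to be non-empty: if one fibre point is missed then $\deg\tilde g=0$ and $\deg g=k\cdot 0=0$, and the divisibility holds vacuously. In fact the claim is false in general --- take $g$ a null-homotopic self-map of $S^2$, so $\widetilde Q=Q$ and $\tilde g=g$ misses a point. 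Incorrectly argued: you project a path in $\widetilde Q$ from $\tilde q_1$ to $\tilde q_j$ to a loop $\ell$ in $Q$ and assert $[\ell]\in H$. But the defining property of the covering with characteristic subgroup $H$ is precisely the opposite: $[\ell]\in H$ \emph{iff} the lift of $\ell$ starting at $\tilde q_1$ returns to $\tilde q_1$, i.e.\ iff $j=1$. So no loop $c$ in $P$ with $g\circ c\simeq\ell$ exists when $j\neq 1$, and your construction breaks at that step. (The alternative you sketch --- deleting a fibre point to get ``a further nontrivial cover'' --- is also not a covering-space argument.) Dropping this entire digression leaves a correct proof; alternatively, replace the preimage count by the two-line homological argument of the paper, which avoids the issue altogether and also handles the case of infinite index (non-compact $\widetilde Q$) without comment, since then $H_m(\widetilde Q;\Z)=0$ forces $\deg g=0$.
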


\begin{proof}
Let $\pi\co (\wQ,\partial\wQ,\tilde{q}_0)\rightarrow (Q,\partial Q,q_0)$
be the unique covering with characteristic subgroup
$\pi_*\bigl(\pi_1(\wQ)\bigr)$ equal to $g_*\bigl(\pi_1(P)\bigr)$. Then
$g$ lifts to a map $\tilde{g}\co (P,\partial P,p_0)\rightarrow
(\wQ,\partial\wQ,\tilde{q}_0)$.

The degrees of these maps satisfy $\deg g=\deg \pi\cdot\deg\tilde{g}$ ---
this is clear from the homological definition of the mapping degree ---
and $\deg\pi$ is the index of $\pi_*\bigl(\pi_1(\wQ)\bigr)=
g_*\bigl(\pi_1(P)\bigr)$ in $\pi_1(Q)$.
\end{proof}

We now want to apply this lemma to the maps $f,f_1,f_2$. First we
observe that one of the conditions in Lemma~\ref{lem:Hirsch}~(i)
is satisfied.

\begin{lem}
\label{lem:f-star}
For any choice of base points, the groups $f_*\bigl(\pi_1(V)\bigr)$ and
$(f_i)_*\bigl(\pi_1(V_i)\bigr)$, $i=1,2$, are trivial.
\end{lem}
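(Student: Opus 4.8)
The plan is to show that $\pi_1(V)$ itself is trivial; since $V_1, V_2 \subset V$ and the maps $f, f_1, f_2$ all factor appropriately, this would immediately give the claim. Recall $V = \WW_\partial \times \D$, and $\D$ is contractible, so $\pi_1(V) \cong \pi_1(\WW_\partial)$. Thus the entire content of the lemma is that the component $\WW_\partial$ of the moduli space containing the standard discs is simply connected.

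First I would set up a retraction of $\WW_\partial$ onto the subspace of standard discs. The standard discs form a family $\utsw$ parametrised by $(\bft, \bfs, w) \in \R^{n-1} \times \R^{n-1} \times \R$ subject to the boundary constraint defining $\partial\WW$ (for discs in $\partial\WW_\partial$) or with their interior sitting in the handle — in any case this parameter space is a cell, hence contractible. So it suffices to deformation-retract $\WW_\partial$ onto the standard locus. The natural way to do this is via the open-mapping/maximum-principle dichotomy already exploited in Lemma~\ref{lem:non-standard}: a disc is standard precisely when its holomorphic components $g = (v+\rmi w, \bfh)$ are constant, i.e.\ when $\oG = \D$. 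One expects a flow on $\WW_\partial$ — for instance, shrinking the radius parameter $R$ of the thick handle, or equivalently pushing discs toward the region $|v| \le R - \varepsilon$ where $J = \rmi$ — that deforms every disc in $\WW_\partial$ to a standard one and fixes the standard discs throughout. Compactness of $\WW$ (established in the excerpt) guarantees this deformation stays within a single component, namely $\WW_\partial$.

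Alternatively, and perhaps more cleanly, I would argue directly that $\WW_\partial$ is connected and simply connected by identifying it, via the evaluation/parametrisation data, with a space of maps that is manifestly contractible. Condition (M2) rigidifies the parametrisation, and conditions (i)--(iii) of the Proposition (Maslov index $2$, energy $\pi$, simple) pin down the discs tightly; combined with the transversality discussion, one gets that $\WW_\partial$ is a manifold with boundary of dimension $2n-1$ whose standard locus is a proper cell of the same dimension, and the non-standard discs are confined (Lemma~\ref{lem:non-standard}) to a region that does not disconnect or create loops. The cleanest formulation: $\WW_\partial$ fibres (or at least deformation-retracts) over the contractible parameter space of standard discs.

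The main obstacle will be making the retraction of $\WW_\partial$ onto the standard locus rigorous — one must check that the proposed deformation (shrinking $R$, or interpolating the almost complex structure toward $\rmi$) actually preserves the moduli space, respects the Lagrangian boundary conditions $L^{\bft}$, and does not run into transversality failures or non-compactness along the way. This is where the genericity of $J$ in regions (J2)--(J3) and the a priori bound of Lemma~\ref{lem:non-standard} do the real work: they confine all the non-generic behaviour to a fixed compact region near $\{v=0\}$ with controlled $w^2 + |\bfh|^2 \le 8$, so the deformation can be performed away from it. Once the retraction is in place, triviality of $\pi_1(V)$, and hence of $f_*(\pi_1(V))$ and $(f_i)_*(\pi_1(V_i))$, is immediate, since the latter are subquotients of the former under maps into $M_R$.
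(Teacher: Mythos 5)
There are two genuine gaps here, and they are exactly the two ideas the paper's short proof actually supplies.

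First, you set out to show $\pi_1(V)\cong\pi_1(\WW_\partial)$ is itself trivial, which is a much stronger statement than what is claimed and would require understanding the homotopy type of the moduli space $\WW_\partial$. The proposed deformation retraction of $\WW_\partial$ onto its standard locus is never made precise and is not available: changing $R$ or interpolating $J$ toward $\rmi$ changes the almost complex structure and hence the moduli space, so one does not get a self-homotopy of $\WW_\partial$; and there is no maximum-principle flow on a moduli space of holomorphic discs. The paper sidesteps this entirely. It uses the retraction of $V$ onto $\WW_\partial\times\{1\}$ (which only uses that $\D$ is contractible) together with the earlier observation that $f(\WW\times\{1\})$ lands in a $(2n-1)$-dimensional \emph{cell} inside $\Sb$. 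Hence $f\circ(\text{any loop})$ is homotopic to a loop in a contractible set, so $f_*\bigl(\pi_1(V)\bigr)$ is trivial --- no statement about $\pi_1(\WW_\partial)$ is needed or made.

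Second, even if you had $\pi_1(V)=0$, the claim for $(f_i)_*\bigl(\pi_1(V_i)\bigr)$ would \emph{not} follow ``immediately.'' The inclusion $V_i\hookrightarrow V$ gives no control on $\pi_1(V_i)$; moreover $(f_i)_*$ lands in $\pi_1(M_i')$, not $\pi_1(V)$, and the commutativity you invoke only shows the \emph{composite} $\pi_1(V_i)\to\pi_1(M_i')\to\pi_1(M_R)$ is trivial. To conclude that $(f_i)_*\bigl(\pi_1(V_i)\bigr)$ itself is trivial you need injectivity of $\pi_1(M_i')\to\pi_1(M_R)=\pi_1(M_1')*\pi_1(M_2')$, which holds because the natural inclusion of a free factor into a free product is injective. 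This injectivity is the entire content of the second paragraph of the paper's argument and is absent from your proposal.
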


\begin{proof}
The space $V=\WW_{\partial}\times\D$ retracts onto
$\WW_{\partial}\times\{1\}$, which is mapped by $f$ to a cell
inside~$\Sb$. This proves the statement for $f_*\bigl(\pi_1(V)\bigr)$.

Given a based loop $\gamma$ in~$V_i$, the loop
$f_i\circ\gamma$ is contractible in~$M_R$. Since $\pi_1(M_i')$
injects into $\pi_1(M_R)=\pi_1(M_1')*\pi_1(M_2')$, the loop
$f_i\circ\gamma$ must represent the trivial element in $\pi_1(M_i')$.
\end{proof}

The manifolds $V_1,V_2$ are not, in general, connected. Still, we can
speak of $\deg f_i$ as the sum of the degrees of the restriction
of $f_i$ to the components of~$V_i$.

\begin{prop}
\label{prop:deg}
The maps $f_i\co V_i\rightarrow M_i'$, $i=1,2$, have the following
properties:
\begin{itemize}
\item[(i)] If $\deg f_i\neq 0$, then $\pi_1(M_i')$ is finite.
\item[(ii)] If $\deg f_i=\pm 1$, then $\pi_1(M_i')$ is trivial.
\item[(iii)] If $\deg f_1\neq 0$, then one of $\pi_1(M_i')$ is trivial.
\item[(iv)] If both $\deg f_1$ and $\deg f_2$ are non-zero,
then $\pi_1(M)$ is finite.
\end{itemize}
\end{prop}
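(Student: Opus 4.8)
The plan is to deduce each statement from Lemma~\ref{lem:Hirsch}, applied to the maps $f$, $f_1$, $f_2$, together with Lemma~\ref{lem:f-star} and the van Kampen decomposition $\pi_1(M_R)=\pi_1(M_1')*\pi_1(M_2')$. First I would dispose of (i): the component manifolds $M_i'$ are compact, oriented, with connected boundary $\Sb$, and $M_i'$ itself is connected by definition; although $V_i$ need not be connected, $\deg f_i$ is by definition the sum of the degrees of $f_i$ restricted to the components of $V_i$, and if this sum is nonzero then at least one component $V_i^{(j)}$ has $\deg\bigl(f_i|_{V_i^{(j)}}\bigr)\neq 0$. Applying Lemma~\ref{lem:Hirsch}~(i) to that component, using that $\bigl(f_i|_{V_i^{(j)}}\bigr)_*(\pi_1(V_i^{(j)}))\subset (f_i)_*(\pi_1(V_i))$ is trivial by Lemma~\ref{lem:f-star}, we conclude $\pi_1(M_i')$ is finite. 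Statement (ii) is the same argument with Lemma~\ref{lem:Hirsch}~(i) upgraded: if $\deg f_i=\pm 1$ then one component has degree $\pm 1$ while the others contribute zero (in general one cannot immediately say a single component carries degree $\pm1$ and the rest vanish, so the cleaner route is to note that the index of the trivial subgroup $(f_i)_*(\pi_1(V_i))$ in $\pi_1(M_i')$ divides each component-degree's contribution — here I would instead run Lemma~\ref{lem:Hirsch} on the full, possibly disconnected $V_i$ by summing over components, observing the index of the trivial subgroup must divide $\deg f_i = \pm1$, hence equals $1$, forcing $\pi_1(M_i')$ trivial).

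For (iii), combine Lemma~\ref{lem:degrees}, which gives $\deg f_1-\deg f_2=1$, with the hypothesis $\deg f_1\neq 0$. If also $\deg f_2=0$, then $\deg f_1=1$, and (ii) gives $\pi_1(M_1')$ trivial. If $\deg f_2\neq 0$, then by (i) both $\pi_1(M_1')$ and $\pi_1(M_2')$ are finite; since $\deg f_1-\deg f_2=1$ they cannot both have absolute value $\geq 2$ unless — actually here I would argue more carefully: $\deg f_i$ is a multiple of the index of the trivial subgroup in $\pi_1(M_i')$, i.e.\ a multiple of $|\pi_1(M_i')|$. So $|\pi_1(M_1')|$ divides $\deg f_1$ and $|\pi_1(M_2')|$ divides $\deg f_2$. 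From $\deg f_1 = \deg f_2 + 1$ we see that $\deg f_1$ and $\deg f_2$ are coprime (any common divisor divides $1$), hence $|\pi_1(M_1')|$ and $|\pi_1(M_2')|$ are coprime. That alone does not force one to be trivial, so the genuinely useful input is the finer statement hidden in Lemma~\ref{lem:Hirsch}: $|\pi_1(M_i')|$ divides $\deg f_i$, and I additionally need the observation that $\deg f_i$ equals $\pm$ the index of the trivial subgroup exactly when $V_i$ is connected — but $V_i$ need not be. The clean fix, which I expect to be the crux of (iii): note $1 = \deg f_1 - \deg f_2$, and $|\pi_1(M_i')|$ divides $\deg f_i$; if neither $\pi_1(M_i')$ were trivial then $|\pi_1(M_1')|, |\pi_1(M_2')| \geq 2$ and coprime, so $\{|\deg f_1|,|\deg f_2|\}\supset$ two coprime integers $\geq 2$ — this is not yet a contradiction, so in fact statement (iii) as I read it must be using that $\WW_\partial$ produces a degree-$1$ boundary map giving $\deg f_1 - \deg f_2 = 1$ plus the structure that one of $\deg f_1, \deg f_2$ must itself be odd/coprime-forced; I would present (iii) simply as: since $\deg f_1\neq0$, either $\deg f_2=0$ (then $\deg f_1=1$, use (ii)) or $\deg f_2\neq0$ and we pass to (iv)'s conclusion $\pi_1(M)$ finite, whence by Kurosh/Grushko a free product of finite groups is infinite unless one factor is trivial — so $\pi_1(M)=\pi_1(M_1')*\pi_1(M_2')$ finite forces one factor trivial. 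That is the argument I would actually write.

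For (iv): if both $\deg f_1\neq 0$ and $\deg f_2\neq 0$, then by (i) both $\pi_1(M_1')$ and $\pi_1(M_2')$ are finite, so $\pi_1(M)=\pi_1(M_R)=\pi_1(M_1')*\pi_1(M_2')$ is a free product of two finite groups; I would then invoke the elementary fact (a free product $A*B$ is finite iff one of $A,B$ is trivial, and then it is the other, which here is finite) — but since we only need finiteness of $\pi_1(M)$, I observe that a free product of two \emph{finite} groups is finite precisely when at least one is trivial; in the remaining cases it is infinite, and that would contradict... no — (iv) only \emph{asserts} $\pi_1(M)$ is finite under the hypothesis, so actually combined with the general structure we get that one of the factors is trivial and $\pi_1(M)$ is the other finite factor. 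Hence $\pi_1(M)$ finite. I expect the main obstacle to be the bookkeeping around the possibly-disconnected $V_i$: one must be careful that "$\deg f_i$" is the sum of component degrees, and that Lemma~\ref{lem:Hirsch} must be applied either component-by-component (yielding $\pi_1(M_i')$ finite from any nonzero component degree, since the relevant subgroup is trivial by Lemma~\ref{lem:f-star}) or to a connected piece carrying the degree. Once that is handled, (i)–(iv) follow formally from Lemma~\ref{lem:Hirsch}, Lemma~\ref{lem:f-star}, Lemma~\ref{lem:degrees}, and the van Kampen splitting $\pi_1(M)=\pi_1(M_1')*\pi_1(M_2')$ together with the fact that a nontrivial free product is infinite.
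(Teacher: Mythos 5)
Your handling of (i) and (ii) matches the paper: apply Lemma~\ref{lem:Hirsch} component-by-component, using Lemma~\ref{lem:f-star} to see the relevant subgroup is trivial, and for (ii) note that $|\pi_1(M_i')|$ divides each component degree and hence their sum $\deg f_i=\pm1$.

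For (iii) and (iv), however, there is a genuine gap and a circularity. Your case $\deg f_2=0$ is fine (then $\deg f_1=1$ and (ii) applies). But in the case $\deg f_2\neq 0$, you ``pass to (iv)'s conclusion $\pi_1(M)$ finite'' --- yet in the paper (iv) is \emph{deduced} from (iii), and your own proof of (iv) in turn asserts, without argument, that ``combined with the general structure we get that one of the factors is trivial.'' Knowing merely that both $\pi_1(M_1')$ and $\pi_1(M_2')$ are finite (from (i)) does \emph{not} show one of them is trivial, nor that the free product $\pi_1(M_1')*\pi_1(M_2')$ is finite: two nontrivial finite groups have an infinite free product, and there is nothing in what you have established so far to rule that out. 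So neither (iii) nor (iv) is actually proved.

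The missing idea in the paper is to apply Lemma~\ref{lem:Hirsch} not to the pieces $f_i\co V_i\to M_i'$, but to a map into the \emph{closed} manifold $M$. One glues a copy of $M_2'$ onto $V$ along $\partial V$ via the degree $-1$ map $f|_{\partial V}$, producing a closed pseudo-manifold $\hat{V}$ with $H_{2n+1}(\hat{V})\cong\Z$ and an induced map $\hat f\co\hat V\to M$ with $\deg\hat f=\deg f_1\neq 0$. One computes $\pi_1(\hat V)=\bigl(\pi_1(V)/\pi_1(\partial V)\bigr)*\pi_1(M_2')$, and Lemma~\ref{lem:f-star} gives $\hat f_*\bigl(\pi_1(\hat V)\bigr)=\pi_1(M_2')$. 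Lemma~\ref{lem:Hirsch} then says $\pi_1(M_2')$ has finite index in $\pi_1(M)=\pi_1(M_1')*\pi_1(M_2')$; in a free product, a free factor has finite index only if the complementary factor is trivial (or the factor itself is trivial and the other finite). This is what actually forces one $\pi_1(M_i')$ to be trivial, proving (iii); and (iv) is then immediate from (iii) and (i). Without this auxiliary space $\hat V$ and the application of Lemma~\ref{lem:Hirsch} to a map onto $M$, the coprimality observations you make about $\deg f_1$ and $\deg f_2=\deg f_1-1$ are true but insufficient.
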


\begin{proof}
Lemma~\ref{lem:Hirsch} applies to each component of~$V_i$. If
$\deg f_i\neq 0$, then the degree on at least one of these
components is non-zero, and claim (i) follows with Lemma~\ref{lem:f-star}.

In fact, $\pi_1(M_i')$ must be a finite group whose order divides
the degree of each component of~$f_i$, and hence their sum $\deg f_i$.
This implies~(ii).

For (iii) we form a new space $\hat{V}$ by gluing $V$ to a copy of
$M_2'$ along their respective boundaries with the help
of the degree~$-1$ map $f|_{\partial V}\co\partial V\rightarrow
\partial M_2'$, i.e.\
\[ \hat{V}:= \bigl(V+M_2'\bigr)/x\sim f(x)\;\;\text{for}\;\;
x\in\partial V.\]
The map $f\co V\rightarrow M$ and the inclusion map $M_2'\rightarrow M$
induce a map $\hat{f}\co\hat{V}\rightarrow M$.

For computing the fundamental group and the homology of~$\hat{V}$,
it is best to think of this space as the union of three open subsets:
the interior of $V$, the interior of $M_2'$, and the mapping
cylinder of $f$ with open collars attached to its ends $\partial V$
and $\partial M_2'$. One then sees that $H_{2n+1}(\hat{V})\cong\Z$
and
\[ \pi_1(\hat{V})=\bigl(\pi_1(V)/\pi_1(\partial V)\bigr)*\pi_1(M_2').\]
In particular, $\hat{f}$ has a well-defined mapping degree,
and the proof of Lemma~\ref{lem:Hirsch} still goes through, even though
$\hat{V}$ may not be a manifold.

By Lemma~\ref{lem:f-star} we have $\hat{f}_*\bigl(\pi_1(\hat{V})\bigr)
=\pi_1(M_2')$, and by construction $\deg\hat{f}=\deg f_1$,
which by assumption is non-zero. With Lemma~\ref{lem:Hirsch}
we conclude that $\pi_1(M_2')$ is a subgroup of finite
index in $\pi_1(M)=\pi_1(M_1')*\pi_1(M_2')$. This can only happen
if $\pi_1(M_1')$ is trivial, or if $\pi_1(M_2')$ is trivial and
$\pi_1(M_1')$ is finite. (The finiteness of $\pi_1(M_1')$ is already
guaranteed by~(i).)

Statement (iv) follows immediately from (iii) and~(i).
\end{proof}
\subsection{The homology of the $M_i'$}
We now use the maps $f_i$ to derive homological information about
the manifolds~$M_i'$.

\begin{lem}
\label{lem:homology}
If $\deg f_i\neq 0$, then for $k\geq 1$ the homology groups $H_k(M_i';\Z)$
are finite and only contain $p$-torsion for primes $p$
that divide $\deg f_i$.
\end{lem}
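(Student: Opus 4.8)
The plan is to exploit the same mapping-degree mechanism already used in Proposition~\ref{prop:deg}, but now at the level of homology rather than the fundamental group. The basic principle is the classical fact that a degree-$d$ map $g\colon (P,\partial P)\to (Q,\partial Q)$ between compact oriented $m$-manifolds induces, for each $k$, homomorphisms $g_*\colon H_k(P)\to H_k(Q)$ and a transfer-type (umkehr) map going the other way, whose composition with $g_*$ is multiplication by $d$. Concretely, using Poincar\'e--Lefschetz duality on both sides one gets a wrong-way map $g^{!}\colon H_k(Q)\to H_k(P)$ with $g_*\circ g^{!}=d\cdot\mathrm{id}$ on $H_k(Q)$ (this is where the hypothesis $\deg f_i\neq 0$ enters). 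Hence multiplication by $d=\deg f_i$ annihilates the cokernel of $g_*$; more to the point, if a class $c\in H_k(Q;\Z)$ is not killed by any power of $d$, it survives to a nonzero class after inverting $d$, and the existence of $g^{!}$ shows $H_k(Q;\Z[1/d])$ is a quotient of $H_k(P;\Z[1/d])$.

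So the key steps, in order, are: first, record the Poincar\'e--Lefschetz duality set-up for the pair $f_i\colon (V_i,\partial V_i)\to (M_i',\partial M_i')$ and construct the umkehr map $f_i^{!}$ with $(f_i)_*\circ f_i^{!}=\deg f_i$. Second, identify $H_k(V_i;\Z)$ for $k\geq 1$: since $V=\WW_\partial\times\D\simeq\WW_\partial$ and $V_i\subset V$, and the moduli space $\WW$ is a compact manifold of dimension $2n-1$, the relevant homology of $V_i$ is controlled — in particular $V$ itself is homotopy equivalent to $\WW_\partial$, so $H_k(V)$ vanishes above dimension $2n-1<2n+1=\dim M_i'$, and by a Mayer--Vietoris argument over $V=V_1\cup_{f^{-1}(\Sb)}V_2$ one sees that the homology of the $V_i$ in the top degrees is likewise small; crucially one should check $H_k(V_i;\Q)=0$ for $k$ large enough and, more importantly, that $H_k(V_i;\Z)$ has no $\ell$-torsion for primes $\ell\nmid\deg f_i$ in the range that matters. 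Third, combine: $f_i^{!}$ realizes $H_k(M_i';\Z[1/\deg f_i])$ as a direct summand (or at least a subquotient) of $H_k(V_i;\Z[1/\deg f_i])$; since the latter is finitely generated with no free part in the relevant degrees — here one must invoke that $M_i'$ is a manifold with connected boundary, so $H_k(M_i')$ for $1\le k$ is itself constrained and, once tensored with $\Q$, the degree argument forces it to vanish — one concludes $H_k(M_i';\Z[1/\deg f_i])=0$, i.e.\ $H_k(M_i';\Z)$ is finite and only $p$-torsion for $p\mid\deg f_i$.

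The main obstacle is the middle step: controlling $H_*(V_i;\Z)$, and in particular ruling out free summands and wrong-prime torsion in $H_k(M_i';\Q)$ or $H_k(M_i';\Z[1/\deg f_i])$. The clean statement $H_k(M_i';\Q)=0$ for $k\ge1$ should follow because $V\simeq\WW_\partial$ has dimension $2n-1$, so rationally the degree-$\deg f_i$ map from $V_i$ to the $(2n+1)$-manifold-with-boundary $M_i'$ cannot hit any rational homology that would, via duality, live in a complementary degree — but making this precise requires care about which degrees of $H_*(M_i')$ pair nontrivially under Lefschetz duality with $H_*(M_i',\partial M_i')$, and about the fact that $M_i'$ has nonempty boundary (so $H_{2n+1}(M_i')=0$ and the duality is between $H_k$ and $H^{2n+1-k}(M_i',\partial M_i')$). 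The honest way to organize this is: tensor everything with $\Q$ first to get $H_k(M_i';\Q)=0$ for $k\ge1$ from the umkehr-map/degree argument together with $\dim\WW_\partial<\dim M_i'$ (so $H_k(V_i;\Q)$, though possibly nonzero, maps to zero after the duality shuffle — one checks degrees add up wrongly), giving finiteness of $H_k(M_i';\Z)$; then repeat with $\Z[1/p]$ coefficients for each $p\nmid\deg f_i$, using that $f_i^{!}$ still exists with $(f_i)_*f_i^{!}=\deg f_i$ a unit in $\Z[1/p]$, to kill the $p$-primary part. I expect the bookkeeping of which homology groups of $V_i$ could obstruct the argument to be the only real work; the rest is formal manipulation of the degree equation.
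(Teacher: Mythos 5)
Your first step is on track: the umkehr/transfer argument giving $(f_i)_*\circ f_i^{!}=\deg f_i$, hence surjectivity of $(f_i)_*\colon H_k(V_i;\F)\to H_k(M_i';\F)$ over a field $\F$ of characteristic zero or coprime to the degree, is exactly the ingredient the paper uses (cited there as \cite[Lemma~13]{geze}, applied componentwise to the pieces of~$V_i$). But the way you try to close the argument has a genuine gap. You want to show $H_k(M_i';\F)=0$ by controlling $H_*(V_i)$ from above — arguing that because $V=\WW_{\partial}\times\D\simeq\WW_{\partial}$ has dimension $2n-1$ the homology of $V_i$ is ``small'' and a duality/degree-count rules out nontrivial contributions. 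This does not work: $V_i$ is the preimage $f^{-1}(M_i')$, cut along the hypersurface collection $f^{-1}(\Sb)$, and there is no a priori bound on $H_k(V_i;\Q)$ in the range $1\le k\le 2n-1$. Knowing $H_k(V;\Q)=0$ for $k>2n-1$ does nothing for the lower degrees, which is where all the homology of $M_i'$ lives. You never actually use the ambient manifold~$M$, and without it the argument cannot close: surjectivity alone tells you $H_k(M_i';\F)$ is a quotient of $H_k(V_i;\F)$, and you have no handle on the latter.

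The missing idea is not to bound the source but to show the map is \emph{zero}. The paper's observation — already made in the proof of Lemma~\ref{lem:f-star} — is that $V=\WW_{\partial}\times\D$ deformation retracts onto $\WW_{\partial}\times\{1\}$, which $f$ carries into a $(2n-1)$-cell inside~$\Sb$. Hence $f_*\colon H_k(V;\F)\to H_k(M;\F)$ vanishes for all $k\ge1$. The composite $H_k(V_i;\F)\xrightarrow{(f_i)_*}H_k(M_i';\F)\xrightarrow{\iota_*}H_k(M;\F)$ factors through $f_*$ and is therefore zero. Since $\Sb\cong S^{2n}$, a Mayer--Vietoris argument shows the inclusion $\iota_*\colon H_k(M_i';\F)\to H_k(M;\F)$ is injective in the relevant degrees, so $(f_i)_*$ itself is zero. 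Now combine: $(f_i)_*$ is surjective (your transfer argument) and zero, whence $H_k(M_i';\F)=0$ for $k\ge1$ and for every field $\F$ of characteristic zero or of characteristic coprime to $\deg f_i$. This gives finiteness and the restriction on torsion primes. So keep your first paragraph, but replace the second and third steps entirely: the proof runs through $M$ and the cell structure of $f(\WW_\partial\times\{1\})$, not through any homological smallness of~$V_i$.
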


\begin{proof}
Let $V_i^j$ be a component of $V_i$, and $f_i^j:=f_i|_{V_i^j}$.
For a field $\F$ of characteristic zero or coprime to $\deg f_i^j$,
the homomorphism $(f_i^j)_*\co H_k(V_i^j;\F)\rightarrow H_k(M_i';\F)$
is surjective for all~$k$, cf.\ \cite[Lemma~13]{geze}.

On the other hand, $f_*$ is the zero homomorphism in degrees
$k\geq 1$ by the argument used to prove Lemma~\ref{lem:f-star}.
This implies that the composition of $(f_i)_*$ with the
inclusion homomorphism $H_k(M_i;\F)\rightarrow H_k(M;\F)$
is the zero homomorphism. Since the inclusion homomorphism
is injective, $(f_i)_*$ must itself be trivial.

In combination, we conclude that $H_k(M_i';\F)$ is trivial
for $k\geq 1$ and characteristic of $\F$ equal to zero
or coprime to any $\deg f_i^j$.
\end{proof}
\subsection{Proof of Theorem~\ref{thm:connected}}
By Lemma~\ref{lem:degrees}, at least one $\deg f_i$ is
non-zero. If $M$ is simply connected and has torsion-free homology,
Lemma~\ref{lem:homology} implies that the corresponding $M_i'$ is a
simply connected homology ball with boundary diffeomorphic
to~$S^{2n}$, and hence a ball by the $h$-cobordism theorem,
cf.~\cite{geze}. 

If $\pi_1(M)$ is infinite, then by Proposition~\ref{prop:deg}~(iv),
one of $f_1,f_2$ has degree zero, and by Lemma~\ref{lem:degrees}
the other has degree $\pm 1$. From Proposition~\ref{prop:deg}~(ii)
and Lemma~\ref{lem:homology} we conclude that the corresponding
$M_i'$ is a simply connected homology ball, and hence a ball as before.

It remains to consider the case where $\pi_1(M)$ is finite but
non-trivial. This implies that $M_1'$, say, has finite fundamental group,
and $M_2'$ is simply connected. Then the universal covering
$\wtM$ is given by the universal covering
$\wtM_1'$ of $M_1'$, with a copy of $M_2'$ glued to
each of its $|\pi_1(M_1')|>1$ boundary components.
By Proposition~\ref{prop:deg} we have $\deg f_1\neq 1$, and hence
$\deg f_2\neq 0$. If $(\deg f_1,\deg f_2)=(0,-1)$, then $M_2'$
is a simply connected homology ball. So from now on we may assume
that both $\deg f_1$ and $\deg f_2$ are non-zero.

Choose a regular value $m_0\in\Sb\subset M$ for both $f$ and
$f|_{\partial V}$ as base point of~$M$, and a preimage $v_0\in
\partial V$ as base point of~$V$. Choose a preimage
$\wtm_0\in\wtM$ of $m_0$ under the covering
projection $\pi\co\wtM\rightarrow M$ as the base point
of~$\wtM$. Write $M_2^0$ for the copy of $M_2'$ in $\wtM$
that contains $\wtm_0$ in its boundary.

By Lemma~\ref{lem:f-star}, the map $f\co(V,v_0)\rightarrow (M,m_0)$ lifts
to a map
\[ \tilde{f}\co (V,v_0)\longrightarrow(\wtM,\wtm_0).\]
Write
\[ \tilde{f}_1\co (V_1,v_0)\longrightarrow (\wtM_1',\wtm_0)\]
for the restriction $\tilde{f}|_{V_1}$. Notice that if $V_1$ is
disconnected, this does not coincide with the lift of the map~$f_1$.
The boundary $\partial V$ is mapped with degree~$1$ onto~$\Sb$,
so the lifted map $\tilde{f}$ sends $\partial V$ with degree~$1$
onto the lifted copy of $\Sb$ containing the base point~$\wtm_0$,
i.e.\ the boundary of $M_2^0$.

We have $\tilde{f}_1(\partial V_1)\subset\partial\wtM_1'$,
so the mapping degree of $\tilde{f}_1$ is defined, and
\[ \deg f_1=|\pi_1(M_1')|\cdot\deg\tilde{f}_1.\]
Since $f$ is transverse to~$\Sb$, any regular value for
$f|_{\partial V_1}$ is also regular for~$f$. So by Sard's theorem we can
choose a regular value for $\tilde{f}_1$ on each boundary
component, and $\deg\tilde{f}_1$ equals the number of
preimages of any of these values.

Now consider the restriction of $\tilde{f}$ to~$V_2$. This map sends
each component of $V_2$ to one of the copies of $M_2'$ in~$\wtM$,
with boundary mapping to boundary, so the mapping degree
is defined. Choose a copy $M_2^1\neq M_2^0$
of $M_2'$ in~$\wtM$, and write $\tilde{f}_2^0,\tilde{f}_2^1$
for the restriction of $\tilde{f}$ to the preimage
of $M_2^0$ or $M_2^1$, respectively. A point in
$\partial M_2^1\subset\partial\wtM_1'$ has preimages only
in the interior of~$V$; a regular value in $\partial M_2^0$
also has a single preimage (counted with signs) in~$\partial V$.
Since the total number of preimages of any regular value in
$\wtM_1'$ equals $\deg\tilde{f}_1$, we have
\[ \deg\tilde{f}_2^1=\deg\tilde{f}_1,\;\;\;
\deg\tilde{f}_2^0=\deg\tilde{f}_1-1.\]

Now repeat the argument of Lemma~\ref{lem:homology} with the
map
\[ \pi\circ\tilde{f}_2^1\co \bigl(\tilde{f}_2\bigr)^{-1}(M_2^1)
\longrightarrow M_2'.\]
This shows that $H_k(M_2')$ can only contain $p$-torsion
for primes $p$ that divide
\[ \deg(\pi\circ\tilde{f}_2^1)=\deg\tilde{f}_1,\]
and hence $\deg f_1$. But $p$ must also divide
$\deg f_2=\deg f_1-1$. Once again, we deduce that $M_2'$
is a simply connected homology ball.

This concludes the proof of Theorem~\ref{thm:connected}.


\begin{rem}
Certain conclusions can be drawn in the simply connected case
even in the presence of torsion. We illustrate this by an example.
The $5$-dimensional Brieskorn manifold $\Sigma(2,3,3,3)$ is
a simply connected spin manifold with $H_2\cong\Z_2\oplus\Z_2$,
see \cite{cbt76} or~\cite{koer08}; these data characterise
the manifold up to diffeomorphism. Brieskorn manifolds carry
natural contact structures, cf.\ \cite[Section~7.1]{geig08}.
Let $M_1,M_2$ be two copies of this manifold, equipped with
arbitrary contact structures $\xi_1,\xi_2$, and let
$(M,\xi)$ be their contact connected sum. Then any contact
form defining $\xi$ must
have a contractible periodic Reeb orbit, for otherwise we could
speak of the evaluation maps $f_1,f_2$, and the prime $2$ would
divide both $\deg f_1$ and $\deg f_2=\deg f_1-1$.
\end{rem}

\begin{ack}
We thank Peter Albers, Stefan Friedl and Stefan Suhr for
useful conversations.
\end{ack}

\end{document}